\documentclass[10pt, a4paper]{amsart}

\usepackage{lmodern}
\usepackage{microtype}
\usepackage{amssymb}
\usepackage{mathtools} 
\usepackage[backend=bibtex8,firstinits, arxiv=abs, doi = false, url = false, style = numeric,
			isbn=false, backrefstyle=three, hyperref]{biblatex} 
\renewbibmacro{in:}{\ifentrytype{article}{}{\printtext{\bibstring{in}\intitlepunct}}} 

\usepackage[english]{babel}
\usepackage{csquotes}
\usepackage{mleftright} 
\usepackage{graphicx}
\usepackage{url}
\usepackage{amscd}
\usepackage[textsize=footnotesize]{todonotes} 

\usepackage{thmtools} 
\usepackage{hyperref}
\usepackage{cleveref} 

\hypersetup{plainpages = false,
    pdfpagelayout = SinglePage, 
    bookmarksopen = true, 
    bookmarksnumbered = true, linktocpage,
    colorlinks = true, 
    linkcolor = black, 
    urlcolor  = blue,
    citecolor = red,  anchorcolor = green}

\DeclareGraphicsExtensions{.png, .jpg, .pdf}
\graphicspath{{img/}{img/}{img/}}

\addbibresource{arrman.bib} 


\usepackage[a4paper, centering, scale=0.77]{geometry} 

\title[Reflections on manifolds]%
			{Coxeter transformation groups and reflection arrangements in smooth manifolds}
\author[R\, Das]{Ronno Das}
\author[P\, Deshpande ]{Priyavrat Deshpande}
\thanks{Both the authors are partially funded by a grant from Infosys Foundation}
\address{Chennai Mathematical Institute \\ H1 SIPCOT IT Park, Siruseri \\ Tamil Nadu, India}
\email{ronno@cmi.ac.in}
\email{pdeshpande@cmi.ac.in} 

\keywords{Coxeter groups, Artin groups, Reflection groups on manifolds, Salvetti complex, Nerve lemma}
\subjclass[2010]{20F55, 52C35, 57S30, 20F36, 20F65}

\makeatletter
\def\thmt@refnamewithcomma #1#2#3,#4,#5\@nil{%
  \@xa\def\csname\thmt@envname #1utorefname\endcsname{#3}%
  \ifcsname #2refname\endcsname
    \csname #2refname\expandafter\endcsname\expandafter{\thmt@envname}{#3}{#4}%
  \fi
}
\makeatother

\declaretheorem[parent=section,refname={theorem,theorems},Refname={Theorem,Theorems}]{theorem}
\declaretheorem[sibling=theorem,style=definition,numbered=yes,refname={definition,definitions},Refname={Definition,Definitions}]{definition}
\declaretheorem[sibling=theorem,refname={lemma,lemmas},Refname={Lemma,Lemmas}]{lemma}

\declaretheorem[sibling=theorem,refname={corollary,corollaries},Refname={Corollary,Corollaries}]{corollary}
\declaretheorem[sibling=theorem, refname={proposition,propositions}, Refname={Proposition,Propositions}, name=Proposition]{proposition}
\declaretheorem[numbered=no,style=remark,refname={remark,remarks},Refname={Remark,Remarks}]{remark}
\declaretheorem[parent=section, style=remark, refname={example,examples}, Refname={Example,Examples}, name=Example]{example}

\numberwithin{equation}{section}

\newcommand{\bt}[1]{\begin{theorem}\label{#1}}
\newcommand{\bc}[1]{\begin{corollary}\label{#1}}
\newcommand{\bl}[1]{\begin{lemma}\label{#1}}
\newcommand{\bp}[1]{\begin{proposition}\label{#1}}
\newcommand{\be}[1]{\begin{example}\label{#1}}
\newcommand{\bd}[1]{\begin{definition}\label{#1}}
\newcommand{\br}[1]{\begin{remark}\label{#1}}
\newcommand{\bx}[1]{\begin{exercise}\label{#1}}
\newcommand{\bcon}[1]{\begin{conjecture}\label{#1}}

\newcommand{\et}{\end{theorem}}
\newcommand{\ec}{\end{corollary}}
\newcommand{\el}{\end{lemma}}
\newcommand{\ep}{\end{proposition}}
\newcommand{\ee}{\end{example}}
\newcommand{\ed}{\end{definition}}
\newcommand{\exc}{\end{exercise}}
\newcommand{\er}{\end{remark}}
\newcommand{\econ}{\end{conjecture}}

\newcommand{\bpr}{\begin{proof}}
\newcommand{\epr}{\end{proof}}

\def\A  {\mathcal{A}}
\def\FA {\mathcal{F(A)}}
\def\Ch {\mathcal{C(\A)}}

\def \F {\mathcal{F}}

\def \RR {\mathcal{R}}
\def \P {\mathcal{P}}

\def \s {\mathcal{S}}
\def \L {\mathcal{L}}

\def\R{\mathbb{R}}

\def\C{\mathbb{C}}
\def\Z{\mathbb{Z}}

\def\ts{\textstyle}

\newcommand*{\from}{\vcentcolon} 
\newcommand*{\set}[2]{\mleft\{#1 \;\middle|\; #2\mright\}} 

\DeclareMathOperator{\Sal}{Sal}
\DeclareMathOperator{\Chain}{Chain}
\DeclareMathOperator{\Lk}{Lk}
\newcommand{\ol}{\overline} 
\newcommand*{\dispunct}[1]{\,\text{#1}} 
\delimitershortfall-1sp 


\begin{document}
\begin{abstract}
Artin groups are a natural generalization of braid groups and are well-understood in certain cases.
Artin groups are closely related to Coxeter groups in the following sense.
There is a faithful representation of a Coxeter group $W$ as a linear reflection group on a real vector space.
The group acts properly and fixes a union of hyperplanes.
The $W$-action extends as the covering space action to the complexified complement of these hyperplanes.
The fundamental groups of the complement and that of the orbit space are the pure Artin group and the Artin group respectively.
For the Artin groups of finite type Deligne proved that the associated complement is aspherical.
Using the Coxeter group data Salvetti gave a construction of a cell complex which is a $W$-equivariant deformation retract of the complement.
This construction was independently generalized by Charney and Davis to the Artin groups of infinite type.
A lot of algebraic properties of these groups were discovered using combinatorial and topological aspects of this cell complex.
 
In this paper we represent a Coxeter group as a subgroup of diffeomorphisms of a smooth manifold. 
These so-called Coxeter transformation groups fix a union of codimension-$1$ (reflecting) submanifolds and permute the connected components of the complement.
Their action naturally extends to the tangent bundle of the ambient manifold and fixes the union of tangent bundles of these reflecting submanifolds.
Fundamental group of the tangent bundle complement and that of its quotient serve as the analogue of pure Artin group and Artin group respectively.
The main aim of this paper is to prove Salvetti's theorems in this context.
We show that the combinatorial data of the Coxeter transformation group can be used to construct a cell complex which is equivariantly homotopy equivalent to the tangent bundle complement.

\end{abstract}
\maketitle
\section{Introduction}\label{intro}
A Coxeter group $W$ is a group with the following presentation - 
\begin{equation}
W = \langle s_1, \dots, s_n \mid s_i^2 = 1, (s_is_j)^{m_{ij}} = 1,\quad \forall i\neq j \text{~and~} 2\leq m_{ij} \leq \infty\rangle\dispunct{.} \label{coxeq}
\end{equation}
Let us first consider the case when $W$ is finite and irreducible. 
Such a group acts linearly (via origin-fixing isometries) on a real vector space $V$ of dimension $n$. 
Its action on $V$ is not free; each reflection (i.e., conjugate of a generating reflection) in $W$ fixes a hyperplane. 
The union of these reflecting hyperplanes is the \emph{reflection arrangement}, denoted $\A_W$, associated to $W$. 
The arrangement induces a stratification of the ambient space into convex open subsets called \emph{faces}.
The codimension-$0$ faces are open simplicial cones called \emph{chambers} and they are permuted freely
under the $W$ action (see \cite[Chapter 6]{davisbook08} and \cite[Chapter 5]{humph90} for details). 

Complexifying this situation we get a finite arrangement of complex hyperplanes in $V\otimes\C$. 
The complement of these hyperplanes, denoted $M_W$, is connected and admits a fixed point free action of $W$. 
Brieskorn \cite{bries73} proved that the fundamental group of the orbit space $N_W := M_W/W$ has the following presentation -
\begin{equation}
\langle s_1,\dots, s_n \mid \underbrace{s_is_js_i\cdots}_{m_{ij}} = \underbrace{s_js_is_j\cdots}_{m_{ij}}, \quad \forall i\neq j\rangle\dispunct{,} \label{artineq}
\end{equation}
where $m_{ij}$'s are integers $\ge 2$.
This is the \emph{Artin group (of finite type)} associated to $W$ and is denoted $A_W$. 
There is a natural surjection from $A_W$ onto $W$ with the kernel the so-called \emph{pure Artin group} $PA_W$; it is the fundamental group of $M_W$. 
For example, if $W$ is the symmetric group on $n$ letters then $A_W$ is the braid group on $n$ strands and $PA_W$ is the pure (or the coloured) braid group.

In his seminal paper Deligne \cite{deli72} showed that the universal cover of $N_W$ is contractible, i.e., $N_W$ is a $K(A_W, 1)$ space. 
Subsequent study of Artin groups is much influenced by Deligne's work. 
Some of the important properties of Artin groups were proved by expanding on his ideas, notably the biautomatic nature of these groups \cite{charney92}. 
Simply put, it says that Artin groups have solvable word and conjugacy problems. \par 
The key tool in dealing with $M_W$ or $N_W$ is the Salvetti complex.
A pioneering result by Salvetti in \cite{sal1} states that the homotopy type of the complexified complement of a locally finite hyperplane arrangement is determined by the incidence relations among the faces of the arrangement.
He further showed in \cite{salvetti94} that for finite reflection arrangements the cell complexes associated to $M_W$ and $N_W$ can be constructed using the combinatorial data of the Coxeter group.\par 
These complexes are constructed as follows. 
First note that the faces of a reflection arrangement $\A_W$ are in one-to-one correspondence with the conjugates of the standard parabolic subgroups of $W$ \cite[Chapter 5]{humph90}.
In fact, the set of all parabolic subgroups $\set{wW_T}{w\in W, T\subseteq S}$ is a poset with the partial order given by $w'W_{T'} \le wW_{T}$ if and only if $T'\subseteq T$ and $w^{-1}w'\in W_T$.
The geometric realization of this poset is the \emph{Coxeter complex} of $\A_W$.

Now, consider the finite collection $\set{(w, T)}{w\in W, T\subseteq S}$ with the partial order given by $(w, T) < (w', T')$ if and only if $T' \subseteq T$ and for every $t\in T$ we have $\ell(w^{-1}w') \prec \ell(tw^{-1}w')$ where $\ell$ counts the length of $w$ as a word over $S$.
Let $\Sal(W)$ denote the regular cell complex whose face poset is isomorphic to the poset constructed above. 
Then Salvetti showed that $\Sal(W)$ is a $W$-equivariant deformation retract of $M_W$.
The natural action of $W$ on $\Sal(W)$ is given by $u\cdot (w, T) = (uw, T)$. 
Let $\ol{\Sal}(W)$ denote the orbit space of this action; it has the same homotopy type as $N_W$.
There is a unique vertex in $\ol{\Sal}(W)$ denoted by $[w, \emptyset]$.
In fact, for every subset $T\subseteq S$ of cardinality $k$ there is a unique $k$-cell indexed by $[w, T]$.

The story is not very different for infinite Coxeter groups. 
An infinite Coxeter group $W$ acts faithfully on a convex cone with non-empty interior in $V$. 
On the interior, which is known as the \emph{Tits cone} and denoted $I$, the action of $W$ is proper. 
The fixed-point set of $W$ is a union of (possibly infinite) family of linear hyperplanes in $V$ such that  for each fixed hyperplane $H$ the intersection $H\cap I$ is non-empty and for every $x\in I$ there exists an open neighbourhood $U_x$ which intersects only finitely many fixed hyperplanes. 
Denote by $M_W$ the following analogue of the complexified complement
\begin{equation}\label{vinmw}
M_W = \left( \big(I\times I\big) \setminus \bigg(\bigcup_{H\in\A} (H\times H)\bigg)\right)
\end{equation}
we call this the \emph{Vinberg complement}.
As proved in the thesis of van der Lek \cite{lek83} the fundamental group of the orbit space $N_W := M_W/ W$
is the \emph{infinite-type Artin group} associated to $W$.

In general it is not known whether the infinite-type Artin groups are torsion free, have solvable word or conjugacy problems. Moreover the analogue of Deligne's theorem is also missing.
The Salvetti complex construction does extend to these groups with a slight modification; one has to consider only those subsets $T$ of $S$ for which the subgroup generated by reflections in $T$ is finite (see Charney-Davis \cite{charneyDavis2}).
These are called the \emph{spherical subsets} of $S$.
The $K(\pi, 1)$ conjecture states that this Salvetti complex is a model for the classifying space of the pure Artin group. 
For certain sub-classes of infinite-type Artin groups, like FC-type \cite{charneyDavis1}, and certain affine Artin groups it has been proved that the corresponding hyperplane complement is aspherical (see \cite[Page 9]{parisarxiv} for an exhaustive list). 

A generalization of hyperplane arrangements was introduced by the second author in \cite{deshpande_thesis11} where a study of arrangements of codimension-$1$ submanifolds in a smooth manifold was initiated.
A Coxeter group can be represented as a group of diffeomorphisms of a smooth manifold.
In fact it is well-known that a discrete group of diffeomorphisms generated by finitely many (dissecting) reflections has a Coxeter presentation as given in \Cref{coxeq} (see \cite{michoretal07, gutkin86, straume81}).
The action of such a transformation group on a smooth manifold fixes a union of codimension-$1$ submanifolds which we call the \emph{manifold reflection arrangement}.
To such a reflection arrangement we associate a topological space called the tangent bundle complement.
This serves as a natural generalization of the complexified complement since topologically $\C^n$ is the tangent bundle of $\R^n$.
The manifold reflection group acts freely and properly discontinuously on the tangent bundle complement.
The fundamental group of the complement and that of the orbit space serves as an analogue of the pure Artin group and the Artin group respectively.
The main aim of this paper is to lay topological foundations for the study of these ``Artin-like'' groups.  

The paper is organized as follows. 
In \Cref{sec:manrefl} we recall the relevant material regarding the reflections on smooth manifolds and discrete subgroups of diffeomorphisms generated by such reflections.
The main purpose of this section is to present a comparison between  manifold reflection groups and the classical reflection groups.
In \Cref{sec:nerve lemma} we first define manifold reflection arrangements and explore their combinatorial properties. 
Then we introduce the idea of the tangent bundle complement and prove the main theorem of this paper (\Cref{main theorem}).
The result says that there exists a regular cell complex defined using the combinatorics of the reflection arrangement with the same homotopy type as that of the tangent bundle complement.
This theorem generalizes the classical theorems of Salvetti. Our proof is based on the one given by Paris in his survey of the $K(\pi, 1)$ conjecture \cite{parisarxiv}.
Finally, in \Cref{sec:group} we show how to use the group theoretic information to describe this Salvetti complex.
We also give a presentation for the fundamental group of the quotient complex which turns out to be similar to that of an Artin group in most cases.

\section{Coxeter transformation groups}\label{sec:manrefl}
In this section we present a brief review of manifold reflection groups.
These are subgroups of the diffeomorphisms group and are generated by finitely many reflections. 
It is interesting to note that the combinatorial and geometric properties of reflection groups are independent of the particular nature of the group action. 
For example, under reasonable (topological) assumptions, these manifold reflection groups have Coxeter presentation.
Also the fixed point set behaves very much like the reflection arrangement.
Moreover the combinatorics of the induced stratification is independent of the topology of the manifold.
The main references for this section are Gutkin \cite{gutkin86}, Davis \cite[Chapter 10]{davisbook08} and Alekseevsky et al \cite{michoretal07} (they consider subgroups of isometries of a complete Riemannian manifold).
See also Straume \cite{straume81} for similar results for groups generated by continuous reflections. 

We begin by recalling some basics of group actions on manifolds, the main reference being Bredon's book \cite{bredon72}.
Let $X$ be a smooth, connected manifold without boundary of dimension $l$ and let $G$ be a discrete group of diffeomorphisms. 
We assume that the $G$-action on $X$ is smooth and proper (i.e., properly discontinuous).
In case the manifold $X$ is open and $G$ is infinite then we prefer that $G$ acts co-compactly.  

As the group action is proper, for every $x\in X$ the associated isotropy subgroup $G_x$ is finite. 
There exists an open neighbourhood $U_x$ of $x$ such that $gU_x\cap U_x = \emptyset$ for $g\in G\setminus G_x$. 
The orbit space $X/G$ is Hausdorff. 
Each orbit $G(x)$ is closed and discrete and by the orbit-stabilizer theorem it is in bijection with $G/G_x$.

Let $H$ be a subgroup of $G$ and let $A$ be a left $H$-space. 
Then $H$ acts on $G\times A$ via $h\cdot(g, a) = (gh^{-1}, ha)$. 
The \emph{twisted product} of $G$ and $A$ is the orbit space of the $H$ action on $G\times A$. 
The twisted product is denoted by $G\times_H A$ and we write $[g, x]$ for the equivalence class of $(g, x)$.
Recall that a \emph{$G$-tube} about an orbit $P$ is a $G$-equivariant embedding $\phi\colon G\times_H A\to X$ onto an open neighbourhood of $P$ in $X$. 
If $A$ is homeomorphic to a disc, then the tube is an \emph{equivariant tubular neighbourhood} of $P$.

\bd{def2s1}Let $x\in S\subset X$ be such that $G_x(S) = S$. Then $S$ is called a \emph{slice} at $x$, if the map 
\[G\times_{G_x} S \to X\]
taking $[g, s]$ to $gs$ is a $G$-tube about $G(x)$. 
A slice $S$ is \emph{linear} if it is $G_x$-equivariantly homeomorphic to a neighbourhood of the origin in some linear representation of $G_x$ on $\R^l$. \ed

A group action on a smooth manifold is \emph{locally smooth} (or locally linear) if every point has a linear slice. 
If a discrete group acts properly as well as smoothly on a manifold $X$ then the action is locally smooth. 
Since each isotropy subgroup $G_x$ is finite there is a $G_x$-invariant metric on $X$ (see \cite[Chapter VI]{bredon72}).
As the action is smooth the exponential map $\exp$ defined (it is defined canonically in terms of the Riemannian metric) on a neighbourhood of the origin in $T_x(X)$ is $G_x$-equivariant.
The $\exp$ map takes a small open disc about the origin homeomorphically onto a neighbourhood of $x$.
If the disc is small enough, then the neighbourhood is a linear slice.
 
Another consequence of the locally smooth action is that the fixed point set of a finite subgroup of $G$ is a \emph{locally flat} submanifold of $X$ embedded as a closed subset. 
Recall that $Y$ is a locally flat submanifold of $X$ if the pair $(X, Y)$ is locally homeomorphic to $(\R^l, \R^k)$. 

\bd{def3s1}An involutive diffeomorphism $r$ on $X$ is a \emph{reflection} if the $r$-action is locally smooth.\ed

In case of $X = \R^l$ one can take a linear isometry as a locally smooth involution with the fixed subset a hyperplane, which separates the ambient space.
However, in general the fixed subset $X_r$ could have components of different dimensions and need not separate the manifold. 
We say that a reflection $r$ is \emph{dissecting} if the fixed set $X_r$ separates $X$.
If the reflection is dissecting then $X_r$ is a codimension $1$ submanifold and its complement has $2$ connected components which are interchanged by $r$ \cite[Lemma 10.1.3]{davisbook08}.
Unless otherwise stated from now on we only consider dissecting reflections.

We note that this assumption covers a fairly large class of examples. 
For instance, let $Y$ be a codimension-$1$ submanifold of $X$ embedded as a closed subset.
Then the mod $2$ homology exact sequence of the pair $(X, X\setminus Y)$ tells us that the complement $X\setminus Y$ has either $1$ or $2$ connected components.
Moreover if $H_1(X, \Z_2) = 0$ then the complement has exactly two connected components.
Hence, in case of simply connected manifolds or homology spheres (of dimension at least $2$) all the reflections with connected codimension-$1$ fixed sets are dissecting.
A general criterion for the separation property is stated in the following lemma.

\bl{lem1sec1} Let $X$ be a connected $l$-manifold and let $Y$ be a connected, $(l-1)$-submanifold embedded in $X$ as a closed subset. Then $X\setminus Y$ has exactly $2$ connected components if and only if the inclusion induced homomorphism $H_c^{n-1}(X, \Z_2)\to H^{n-1}_c(Y, \Z_2)$ (on cohomology with compact support) is trivial. \el

\begin{proof}
The proof is a simple diagram chase:
\[\begin{CD}
  H^{n-1}_c(X, \Z_2) @> >> H^{n-1}_c(Y, \Z_2) \cong \Z_2\\
    @V\cong VV @VV\cong V \\
    H_1(X, \Z_2) @> >> H_1(X, X\setminus Y, \Z_2)@> >> \tilde{H}_0(X\setminus Y, \Z_2) @> >> 0\dispunct{.}\end{CD}
   \] 
The vertical isomorphisms are the duality isomorphisms. 
\end{proof}



We now explore the structure of the group of diffeomorphisms generated by finitely many reflections.
Let $W$ denote a discrete group of diffeomorphisms that is generated by finitely many reflections and whose action on $X$ is proper and smooth. 
Let $R$ denote the set of all reflections in $W$ (i.e., conjugates of the generating reflections). 
For every $r\in R$ the fixed set $X_r$ is the \emph{wall} associated to $r$ (note that a reflection is uniquely determined by its wall). 
Denote the collection of walls in $X$ by $\A_W := \{X_r\mid r\in R \}$.
Let $\mathcal{C}(\A_W)$ be the set of connected components of the complement $X\setminus \bigcup_{r\in R}X_r$. 
Elements of $\mathcal{C}(\A_W)$ are called \emph{chambers}.
A \emph{wall of a closed chamber} $C$ is a wall $X_r$ such that $C\cap X_r$ is non-empty.
In this case the set $C_r := C\cap X_r$ is a \emph{mirror} of $C$.
Two chambers are \emph{adjacent} if they intersect in a common mirror. 

Fix a chamber $C$ and call it the \emph{fundamental chamber} (note that this is the strict fundamental domain for the $W$ action). 
Let $S$ be the set of reflections $s$ such that $X_s$ is a wall of $C$; these are the \emph{simple reflections}. 
Recall that a \textit{mirror structure} on a space is just collection of closed subspaces indexed by a set \cite[Chapter 5]{davisbook08}.
The family $(C_s)_{s\in S}$ is the \emph{tautological mirror structure} of $C$ and the chamber is called the \emph{mirrored space} over $S$.
For an $x\in C$ we denote by $S(x)$ the set of walls that contain $x$; note that $S(x) = \emptyset$ if $x\in C^{\circ}$.
The following important theorem justifies the term \textit{Coxeter transformation groups} for such groups; see \cite[Theorem 10.1.5]{davisbook08}, \cite[Theorem 3.10]{michoretal07} and \cite[Theorem 1]{gutkin86}. 
This theorem is indeed true for reflection groups acting on topological and generalized manifolds, see \cite[Corollary 1.1]{straume81}.

\bt{thm1sec1} Let $W$ be a discrete group of diffeomorphisms acting properly and smoothly on a connected manifold $X$, let $C$ be a fundamental chamber, let $S$ denote the set of all simple reflections and for $s, r\in S$ let $m_{sr}$ denote the order of $sr$.
Then $W$ is a Coxeter group with the presentation
\[W = \langle S \mid s^2 = 1, ~ (sr)^{m_{sr}} = 1, \forall s, r\in S, r\neq s, 2 \leq m_{sr} \leq \infty \rangle \dispunct{.} \]
\et 

Next we describe the relationship between the fundamental chamber and the ambient manifold. 
The manifold $X$ with $W$-action can be reconstructed from the fundamental chamber $C$ using the \emph{universal construction} of Vinberg \cite{vinberg71} (called the basic construction in \cite[Chapter 5]{davisbook08}).
Define the equivalence relation on $W\times C$ by
\[(g, x)\sim (h, y) \iff x = y,~~ g^{-1}h\in W_x \]
where $W_x$ is the isotropy subgroup. 
Denote the quotient space by $\mathcal{U}(W, C)$ and the elements by $[g, x]$.
The natural $W$-action on the product descends to an action on $\mathcal{U}(W, C)$.
The isotropy subgroup at $[g, x]$ is $gW_{S(x)} g^{-1}$.
The map defined by sending $x\mapsto [1, x]$ is an embedding. 
The projection on the second factor descends to a retraction $p\colon \mathcal{U}(W, C)\to C$.
The construction is universal in the sense that if $D$ is another space with $W$-action and $f\colon C\to D$ is a map such that for all $s\in S, f(C_s)\subset D^s$ (here $D^s$ the fixed set of $s$ on $D$). 
Then there is a unique extension to a $W$-equivariant map $\tilde{f}\colon \mathcal{U}(W, C)\to D$ defined by sending $[g, x]\mapsto g\cdot f(x)$.
\bt{thm2sec1} With the notation as above the natural $W$-equivariant map $\mathcal{U}(W, C)\to X$ induced by the inclusion $C \hookrightarrow X$, is a homeomorphism. \et 

The collection $\A_W$ defines a stratification of the manifold; we now focus on exploring various properties of these strata.
Recall that a homeomorphism between two subsets of $\R^l$ is a diffeomorphism if it extends to a local diffeomorphism on some open neighbourhood. 
A second countable, Hausdorff space $C$ is a \emph{smooth $l$-manifold with corners} if it is equipped with a maximal atlas of local charts from open subsets of $C$ to open subsets of the standard $l$-simplicial cone $\R^l_+ := [0, \infty)^l$ so that the transition functions are diffeomorphisms. 
Given a point $x\in C$ and its local coordinates $(x_1,\dots, x_l)\in \R^l_+$ in a neighbourhood the number $c(x) := |\{i\mid x_i = 0 \}|$ is independent of the choice of coordinates and is known as the \emph{depth of $x$}. 
For $0\leq k\leq l$ a connected component of $c^{-1}(k)$ is a \emph{pure stratum} of codimension-$k$ and a \emph{stratum} is the closure of a pure stratum. 
A smooth manifold with corners is \emph{nice} if each stratum of codimension $2$ is contained in the closure of exactly two strata of codimension $1$. 
As a consequence of niceness we have that the closure of a codimension-$k$ stratum is also a smooth manifold with corners. 
Simplicial cones and simple convex polytopes in an euclidean space are examples of smooth, nice manifolds with corners.

\bd{def4s1}
A \emph{mirrored manifold with corners} is a smooth, nice manifold with corners $C$ together with a mirror structure $(C_s)_{s\in S}$ on $C$ indexed by some set $S$ such that
\begin{itemize}
	\item each mirror is a disjoint union of closed codimension one strata of $C$ and 
	\item each closed codimension one stratum is contained in exactly one mirror.
\end{itemize}
\ed
The following theorem describes the structure a closed chamber \cite[Proposition 10.1.9]{davisbook08}.

\bt{thm3sec1}Let $W$ be a Coxeter transformation group acting properly and smoothly on a smooth $l$-manifold $X$. Let $C$ be a fundamental chamber endowed with its tautological mirror structure $(C_s)_{s\in S}$. 
Then $C$ is a mirrored manifold with corners. \et 

There is a converse to the above theorem for which we need one more piece of terminology. The tautological mirror structure of $C$ is said to be \emph{$W$-finite} if for any subset $T\subseteq S$ such that the subgroup generated by $\langle s\in T\rangle$ is infinite then the corresponding intersection of mirrors $\bigcap_{s\in T} C_s$ is empty.

\bt{thm4sec1}
Let $(W, S)$ be a Coxeter group and $C$ is a mirrored manifold with corners with $W$-finite mirror structure $(C_s)_{s\in S}$. Then $\mathcal{U}(W, C)$ is a manifold and $W$ acts properly and smoothly on it as a group generated by reflections.
\et 

\section{Manifold reflection arrangements}
\label{sec:nerve lemma}
In this section we focus on a particularly nice class of manifolds with Coxeter group action.
Our aim is to introduce an analogue of the space $M_W$ and study its homotopy type and that of its quotient by the Coxeter group action.
We achieve our aim by extending the Salvetti complex construction.
We begin by defining the reflection arrangements for manifolds.
\subsection{Definition and examples}
First we isolate important characteristics of the finite reflection arrangements in $\R^l$ that help determine the topology of $M_W$:
\begin{enumerate}
\item the fixed point sets of reflections are codimension-$1$ subspaces that separate $\R^l$;
\item the intersections of these hyperplanes define a stratification of $\R^l$ into open polyhedral cones;
\item the geometric realization of the face poset of this stratification has the homotopy type of $\R^l$.
\end{enumerate}
It is clear that in order to generalize the above properties to the setting of smooth manifolds we need to consider Coxeter transformation groups acting smoothly and properly. 
The walls (i.e., codimension-$1$ submanifolds) fixed by the reflections in $W$ (since they are dissecting) will serve the purpose of reflecting hyperplanes.
 
Let $W$ be such a Coxeter transformation group acting on $X$ and let $\{H_1,H_2,\dots\}$ be the set of all connected components of walls.
We denote by $\L$ the set of all non-empty intersections of $H_i$'s and by $\L^d$ the subset of codimension-$d$ intersections.
For example, we have $\L^0 = \{X\}$ and $\bigcup\L^1 = \bigcup_i H_i$.
For each $d\geq 0$ define the \emph{set of codimension-$d$ strata}
\[\operatorname{St}^d(X) = \text{connected components of } \bigcup\L^d \setminus \bigcup \L^{d+1} \dispunct{.}\]
The set $\operatorname{St}(X) = \bigcup_{d\geq 0} \operatorname{St}^d(X)$ is equipped with the partial order given by the topological inclusion. 
Note that $X$ is a disjoint union of the subsets in $\operatorname{St}(X)$.
The reader can check that this defines a \emph{stratification} in the sense of \cite[Definition 2.1]{tamaki01}.

We would like the stratification of $X$ induced by the reflecting submanifolds to satisfy condition (2) and the corresponding face poset to satisfy condition (3) above.
From the previous section we know that each closed stratum is a nice manifold with corners, however, the resulting face poset need not realize the homotopy type of $X$.
We propose the following definition.
\bd{def1sec3}
Let $X$ be a smooth $l$-manifold, let $W$ be a Coxeter transformation group acting properly and smoothly and let $R$ be the set of reflections in $W$.
The \emph{(manifold) reflection arrangement} corresponding to $W$ is the finite collection 
\[ \A_W = \{X_r\mid r\in R\}\] 
of walls fixed by $W$ given that the following conditions are satisfied:
\begin{enumerate}
\item The stratification of $X$ induced by the intersections of these submanifolds define the structure of a regular cell complex.
\item Each closed chamber, as a nice manifold with corners, is combinatorially equivalent to a simple, convex polytope.
\end{enumerate}
\ed
The face poset of a nice manifold with corners is the set of all connected components of $c^{-1}(k)$ for all $k$ and ordered by topological inclusion. Two nice manifolds with corners are said to be \textit{combinatorially equivalent} if their face posets are isomorphic.
In view of \cite[Corollary 5.2]{wiemeler13}, if $P$ is the simple convex polytope combinatorially equivalent to the (closed) fundamental chamber $\ol{C}$ then there is a diffeomorphism of nice manifolds with corners $f\colon \ol{C}\to P$ that induces the given isomorphism on the face posets.
In the language of \cite{davis14}, we assume that the closed chambers are Coxeter orbifolds of type (III).

We denote the face poset of a manifold reflection arrangement by $\mathcal{F}(\A_W)$ (we will drop the subscript $W$ if the context is clear).
According to \Cref{thm3sec1} this cellular decomposition is simple, i.e., a codimension-$k$ cell is in the closure of $k$ codimension-$1$ cells.
As before the codimension-$0$ faces are called \emph{chambers}.
The set of chambers of the arrangement $\A_W$ will be denoted by $\mathcal{C}(\A_W)$.
From now on $C$ will always denote an open chamber.

\begin{remark}
By \Cref{thm1sec1}, the groups $W$ that appear in this context are Coxeter groups, and hence appear as reflection groups on some $\R^n$.
The obvious examples of manifold reflection arrangements are obtained by restricting a reflection group acting on euclidean space to an invariant embedded submanifold, see \Cref{ex1,ex2}.
For less straightforward examples one should look at Davis' seminal work on exotic aspherical manifolds. 
In dimension $l > 4$ he considers a compact contractible manifold whose boundary is a homology sphere (preferably not simply connected). 
Then he constructs an open contractible $l$-manifold with a (right-angled) Coxeter group action such that the fundamental domain is the given compact contractible manifold and its mirrors form the homology sphere.
Further details of this construction and the groups are beyond the scope of this paper, hence we refer the interested reader to \cite[Chapter 10]{davisbook08}.
\end{remark}

\be{ex1}
Consider the dihedral group of order $2m$ generated by two generators, say $\{s, t\}$. 
Its natural reflection action on $\R^2$ fixes a union of $2m$ lines passing through the origin. 
The intersection of this line arrangement with the unit circle $S^1$ gives us a reflection arrangement on $S^1$.
The fundamental chamber is a $1$-simplex and its mirrors are two points labeled $C_s$ and $C_t$.
Note that the two element set $\{s, t\}$ is spherical but $C_s\cap C_t = \emptyset$.
In particular, consider the action of $S_3$, the symmetric group on $3$ letter on $S^1$ (\Cref{s3ons1}). 
The chambers (the $6$ arcs) are labeled by the group elements whereas the fixed submanifolds (the $6$ vertices) are labeled by the one-generator parabolic subgroups.
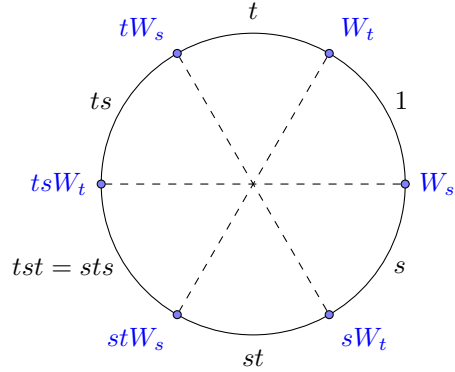
\begin{figure}[htb]
    \centering
\begin{tikzpicture}[fix/.style={circle,draw,fill=blue!50,thin,
inner sep=0pt,minimum size=3pt}]
\draw (0,0) circle (2);
\node (Ws) at (2,0) [label={[blue]right:$W_s$},fix] {};
\node (Wt) at (1,1.732) [label={[blue]60:$W_t$},fix] {};
\node (tWs) at (-1,1.732) [label={[blue]120:$tW_s$},fix] {};
\node (tsWt) at (-2,0) [label={[blue]left:$tsW_t$},fix] {};
\node (stWs) at (-1,-1.732) [label={[blue]240:$stW_s$},fix] {};
\node (sWt) at (1,-1.732) [label={[blue]300:$sW_t$},fix] {};

\draw[dashed] (Wt) -- (stWs);
\draw[dashed] (Ws) -- (tsWt);
\draw[dashed] (sWt) -- (tWs);

\node at (1.95,1.1) {$1$};
\node at (0,2.3) {$t$};
\node at (-2,1.1) {$ts$};
\node at (1.93,-1.1) {$s$};
\node at (0,-2.3) {$st$};
\node at (-2.5,-1.1) {$tst = sts$};


\end{tikzpicture}

  \caption{The $S_3$ action on $S^1$.}     \label{s3ons1}     
\end{figure}

In general, any reflection group acting on $\R^{l+1}$ via linear isometries restricts to an action on $S^l$.
This provides us with a number of examples of manifold reflection arrangements.
\ee

\be{ex2}
Let $C$ be the $1$-simplex and let $s$ and $t$ be the reflections across its endpoints and set $m_{st} = \infty$.
The reader can verify that the group generated by these two elements is the infinite dihedral group $\Z/2 \ast \Z/2$ and the manifold $\mathcal{U}(W,C)$ is $\R^1$.
\ee

\be{ex3}
If the fundamental chamber $C$ is combinatorially equivalent to the unit simplex of dimension at least $2$ and $W$ is a finite Coxeter group of rank at least $3$ then the manifold $\mathcal{U}$ is homeomorphic to the unit sphere (in fact, $W$ can be made to act via isometries \cite[Lecture 3]{davis11}).
In this case the manifold reflection arrangement is a collection of codimension-$1$ sub-spheres. 
All the positive-dimensional intersections are connected spheres.
\ee

\be{ex4}
Let $\mathbb{X}^l$ denote either $\R^l$ or $S^l$ or the hyperbolic space $\mathbb{H}^l$. 
Let $K$ be a (simple) convex polytope in $\mathbb{X}^l$ with nonobtuse dihedral angles (see \cite[Section 6.3]{davisbook08} for precise definition).
Denote by $S$ the set of reflections across the codimension-$1$ faces of $K$.
The collection $\{ K_s\mid s\in S\}$ of codimension-$1$ faces defines a mirror structure on $K$.
If $K_s\cap K_t\neq\emptyset$ then it is a codimension-$2$ face with the dihedral angle $\frac{\pi}{m_{st}}$, for some integer $m_{st}\geq 2$.
If such an intersection is empty then put $m_{st} = \infty$.
It follows from \cite[Theorem 6.4.3]{davisbook08} that the group $W \subsetneq \mathrm{Isom}(\mathbb{X}^l)$ generated by $S$ is a Coxeter group.
The universal space $\mathcal{U}(W, K)$ is $W$-equivariantly homeomorphic to $\mathbb{X}^l$.
The $W$-action is proper and $K$ is a strict fundamental domain.
\ee

 

We say that a wall $X_r$ of $\A$ \emph{separates} two chambers $C$ and $D$ if they are contained in distinct connected components of $X\setminus X_r$. 
For two chambers $C$ and $D$, the set of all walls that separate these two chambers is denoted by $\RR(C, D)$. 
The \emph{distance between two chambers} is the cardinality of the set $\RR(C, D)$ and is denoted by $d(C, D)$. 
\bp{lem1sec3}
Let $X$ be an $l$-manifold and $\A$ be an arrangement of submanifolds, let $C_1, C_2, C_3$ be three chambers of this arrangement. 
Then, 
\[\RR(C_1, C_3) = [\RR(C_1, C_2)\setminus \RR(C_2, C_3)] \cup [\RR(C_2, C_3)\setminus \RR(C_2, C_1)]. \] 
\ep

\bd{def2sec3}
Let $\A_W$ be a manifold reflection arrangement in $X$, corresponding to $W$.
For a point $x\in X$ the \emph{local arrangement} at $x$ is 
\[\A_x \vcentcolon= \{X_r\in \A_W\mid x\in X_r \}. \]
For a face $F\in \mathcal{F}(\A_W)$ the local arrangement at $F$ is
\[ \A_F \vcentcolon= \{X_r\in \A_W\mid F\subseteq X_r \}.\]
\ed

A local arrangement $\A_F$ need not be a reflection arrangement in the sense of \Cref{def1sec3}.
However it does define a stratification of $X$ which we denote by the pair $(X, \A_F)$. 
A \emph{map of stratified spaces} is a continuous map that induces an order preserving map on the corresponding face posets (such maps are called strict morphisms in \cite[Definition 2.15]{tamaki01}).
Define a map from $(X, \A)$ to $(X, \A_F)$ by sending a face $G$ to the face of $\A_F$ of least dimension that contains $G$.
The reader can verify that this is a map of stratified spaces.
We denote by $\pi_F$ the induced map on the face posets.  
We also use the notation $C_F$ for the chamber $\pi_F(C)$ of $\A_F$. 
 

\bd{def3sec3}
Given a face $F$ and a chamber $C$ denote by $F\circ C$ a chamber that satisfies:
\begin{enumerate}
	\item $F \subseteq \overline{F\circ C}$,
	\item $\pi_F(C) = \pi_F(F\circ C)$,
	\item $d(C, F\circ C) = \hbox{~min~} \{d(C, C') \mid C'\in \Ch, F\subseteq\overline{C'} \}$. 
\end{enumerate}
\ed

The following result is evident.
\bp{lem2sec3}
The chamber $F\circ C$ always exists and is unique.
\ep

The reader can verify that if $F\leq C$ then $F\circ C = C$ and for $F\leq F'$ we have $F'\circ (F\circ C) = F'\circ C$.

Before moving on we should convince the reader that there are examples other than the classical ones.
Since each chamber $C$ is simply connected and for every $s\in S$ the closed codimension-$1$ face $C_s$ is non-empty the manifold $X \cong \mathcal{U}(W, C)$ is also simply connected if and only if for each spherical subset $\{s, t\}$ we have $C_s\cap C_t\neq \emptyset$ (\cite[Theorem 9.1.3]{davisbook08}). Moreover the manifold $X$ is contractible if and only if for every non-empty spherical subset $T$ the intersection $\cap_{s\in T} C_s$ is acyclic (\cite[Theorem 9.1.4]{davisbook08}).

\subsection{The tangent bundle complement} \label{subsec:tbc}

In the classical case the action of the reflection group on $\R^l$ extends to $\C^l$ by extension of scalars. 
Note that topologically $\C^l$ is the tangent bundle of $\R^l$. 
The Coxeter transformation group acts locally smoothly on the manifold and hence its action naturally extends to the tangent bundle. 
The fixed points of this action are the tangent bundles of the reflecting submanifolds. 
Hence the complement of the union of these tangent bundles over reflecting submanifolds is topologically a generalization of the complexified complement.
\bd{def4sec3}
The \emph{tangent bundle complement} associated with the manifold reflection arrangement $\A_W$ in $X$ is denoted $M(\A_W)$ or $M_W$ and defined as 
\[M_W \vcentcolon= TX \setminus \bigcup_{r\in R} TX_r. \]
\ed
Note that the Coxeter group action on $M_W$ is a covering space action. 
Denote the orbit space of this action by $N_W$. 
It is clear that the group $\pi_1(M_W)$ is an analogue of the pure Artin group whereas $\pi_1(N_W)$ is an analogue of the Artin group in the sense that they fit in an analogous exact sequence.



\subsection{The Salvetti complex construction} \label{subsec:sal}

The aim is to prove that there is exists a regular cell complex that is $W$-equivariantly homotopy equivalent to the tangent bundle complement $M_W$.

\begin{definition}
The \emph{Salvetti poset} of a manifold reflection arrangement $\A_W$ in an $l$-manifold $X$ is the set 
\[\Sal_0(\A_W) = \set{(F,C) \in \F(\A_W) \times \mathcal{C}(\A_W)}{F \le C}\]
with the following relation $\preceq$ on $\Sal_0(\A_W)$ as a partial order
\[(F,C) \preceq (G,D) \text{ if } F \le G \text{ and } C_F \subseteq D_G \dispunct{.}\]
The \emph{Salvetti complex} of $\A_W$, denoted by $\Sal(\A_W)$, is defined to be the geometric realization of the derived complex of $(\Sal_0(\A_W),\preceq)$.
\end{definition}

An alternative way of defining the Salvetti complex, in terms of parabolic subgroups is discussed and shown to be equivalent to this construction in \Cref{subsec:group salvetti}.

\begin{theorem}\label{main theorem}
There exists a $W$-equivariant homotopy equivalence $f \from \Sal(\A_W)\to M_W$.
This induces a homotopy equivalence $\bar f \from \Sal (\A_W)/W \to M_W/W$.
\end{theorem}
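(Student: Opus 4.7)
The strategy is to apply the Nerve Lemma to a $W$-equivariant open cover of $M_W$ indexed by $\Sal_0(\A_W)$ whose nerve is precisely the order complex underlying $\Sal(\A_W)$. We adapt Paris's proof in the linear case by passing to local linear models via the exponential map. Fix a $W$-invariant Riemannian metric on $X$; this exists because the $W$-action is proper and smooth (see \Cref{sec:manrefl}). The exponential map $\exp_x \from T_xX \to X$ is then $W_x$-equivariant and, on a small ball, a diffeomorphism onto a linear slice at $x$. Within such a chart, $\A_W$ looks exactly like the linear reflection arrangement of the finite isotropy group $W_x$ acting on $T_xX$, so all local computations reduce to the classical Euclidean setting already handled by Paris.

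For each $\sigma = (F, C) \in \Sal_0(\A_W)$ define
\[U_\sigma \vcentcolon= \set{(x,v) \in M_W}{x \in V_F,\ \exp_x(tv)\text{ lies in a chamber }D\text{ with }D_F = C_F\text{ for all small }t>0},\]
where $V_F$ is a small $W$-equivariant open neighborhood of $F$ contained in $\bigcup_{G\ge F} G$, chosen so that $V_F \cap V_{F'} \ne \emptyset$ forces $F$ and $F'$ to be comparable in $\F(\A_W)$. The transversality condition built into $M_W$ (namely $v \notin T_xX_r$ whenever $x \in X_r$) guarantees that for every $(x,v) \in M_W$ the ray direction picks out a unique ``ray-chamber'' $D$ of the full arrangement for small $t>0$. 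Note that $U_\sigma$ is canonically associated to $\sigma$, and $u\cdot U_\sigma = U_{u\sigma}$ under the action $u\cdot(F,C) = (uF, uC)$.

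Three things then need to be checked, each a translation of Paris's argument through these local charts. First, \emph{covering}: any $(x,v) \in M_W$ lies in $U_{F_x, D}$ where $F_x$ is the face of $x$ and $D$ its ray-chamber. Second, \emph{nerve identification}: $U_{\sigma_1} \cap \cdots \cap U_{\sigma_k} \ne \emptyset$ iff the $\sigma_i$ form a chain in $(\Sal_0(\A_W), \preceq)$. A point $(x,v)$ in such an intersection has a face $F_x$ with $F_i \le F_x$ for all $i$ (and the $F_i$ are pairwise comparable by the choice of $V_F$), while the common ray-chamber $D$ satisfies $D_{F_i} = (C_i)_{F_i}$; these together unpack to $\sigma_i \preceq (F_x, D)$, forcing the $\sigma_i$ to form a chain. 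The converse is an explicit construction of a point realizing the chain. Third, \emph{contractibility}: each non-empty intersection deformation retracts onto a canonical point associated to the chain's maximal element; under the exponential map at that point the retraction reduces to the standard contraction for the linear arrangement $\A_F$, which is the classical input.

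The Nerve Lemma then yields a $W$-equivariant homotopy equivalence $f\from \Sal(\A_W)\to M_W$. Since $W$ acts freely on $M_W$ as a covering space action and freely on $\Sal(\A_W)$ via the cellular action described above, the equivalence descends to the quotient homotopy equivalence $\bar f\from \Sal(\A_W)/W \to M_W/W$. The principal difficulty is step (iii): the linear-case retraction uses a global affine structure, while we only have a local linear model. The resolution is to shrink the $V_F$ enough that every non-empty intersection $U_{\sigma_1}\cap\cdots\cap U_{\sigma_k}$ lies inside a single exponential chart centered at a point of the maximal face of the chain, where Paris's collapsing argument applies verbatim; $W$-equivariance throughout is preserved by making all choices with respect to the invariant metric.
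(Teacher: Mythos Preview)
Your overall strategy---build a $W$-equivariant open cover of $M_W$ indexed by $\Sal_0(\A_W)$ with the correct nerve and contractible intersections, then invoke the equivariant nerve lemma---is exactly the paper's approach, and your fibrewise definition of $U_\sigma$ via the ray-chamber of $v$ is equivalent to the paper's inner-sector definition. The divergence is in how you construct the base neighbourhoods $V_F$ and how you prove contractibility of intersections, and here there is a genuine gap.

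The proposed resolution ``shrink the $V_F$ enough that every non-empty intersection lies inside a single exponential chart'' cannot work as stated. First, the $V_F$ must cover $X$, and since $V_F$ is required to be a neighbourhood of $F$, it must contain all of $F$; a positive-dimensional face has a fixed diameter that cannot be shrunk, so for a chain whose minimal face is, say, an edge, the intersection of the $V_{F_i}$ has diameter bounded below by something independent of your choices, while the injectivity radius is not under your control. Second, even where an exponential chart exists, it does not linearize the arrangement: walls through the chart's center become linear, but walls through a smaller face $F_0$ in the boundary appear as curved hypersurfaces, so Paris's convexity argument does not transfer ``verbatim.'' (Relatedly, the chart should be centered at the \emph{minimal} face of the chain, not the maximal one: the local arrangement at a point of $F_k$ sees only $\A_{F_k}$, which forgets precisely the walls needed to distinguish the $(C_i)_{F_i}$ for $i<k$.)

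The paper sidesteps both problems by abandoning the Riemannian exponential in favour of the standing hypothesis that each closed chamber is diffeomorphic to a simple convex polytope. It fixes compatible convex structures $\varphi_F$ on all faces, uses them to define formal convex combinations inside $\ol F$, and builds the base sets $\omega(F)$ as unions of explicit ``simplicial'' pieces $\Theta(\gamma)$ indexed by chains starting at $F$. Contractibility of $\omega(\gamma)$ is then proved directly via a star-shaped retraction in these convex coordinates (\Cref{lemma:omega restricts to contractible,lemma:omega contractible}), with no appeal to a local linear model. Your plan becomes correct if you replace the exponential charts by these polytope charts; without them the contractibility step is unfounded.
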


The proof closely imitates that of Theorem~3.1 in \cite{parisarxiv}.
Since our arrangements are assumed to originate from a group action, we deal with only the equivariant case of the constructions.
The principal step is to find a nerve of $M_W$ that equivariantly corresponds to $\Sal_0(\A)$, so that we can apply the equivariant nerve lemma \cite[Proposition~2.2]{parisarxiv}.

In the proof in \cite{parisarxiv}, the linear structure of $\R^k$ is heavily used.
In particular, since the tangent bundle is globally trivial, the open sets $U(F,C)$ can be defined as products.
Since we instead are working on a manifold, we need to pullback similar product constructions to the tangent bundle.
The convex structure on faces, defined below, lets us establish analogous combinatorial properties for our construction.

\begin{definition} If $F \in \FA$ is a face of dimension $k$, define a \emph{convex structure} on $F$ to be a diffeomorphism $\varphi \from P \to \ol{F}$ for some simple convex polytope $P \subset \R^k$.\end{definition} 

Since both the spaces are manifolds with corners, any such diffeomorphism must also be a combinatorial equivalence.
That is to say, faces of $P$ are mapped to faces $F' \le F$, and vice-versa, and $F'' \le F'$ iff $\varphi^{-1}(F'') \le \varphi^{-1}(F')$ for faces $F',F'' < F$.

%

%

\begin{lemma}
There is a collection of convex structures $\varphi_F$ for each $F \in \FA$ such that for any $F < F'$, if $P = \varphi_{F'}^{-1}(\ol{F})$, then $\varphi_{F'}|_{P} = \varphi_F$.
Further, these can be chosen in an equivariant way, that is, such that $\varphi_{wF} = w \circ \varphi_F$ for $w\in W$.
\end{lemma}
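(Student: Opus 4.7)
The plan is to bootstrap from a single convex structure on the closed fundamental chamber $\ol{C}$ and propagate everywhere else via the $W$-action, using crucially that $\ol{C}$ is a strict fundamental domain and that face stabilizers are parabolic subgroups fixing the face in question pointwise.

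First I would fix a convex structure $\varphi_C \from P_C \to \ol{C}$ for the fundamental chamber, which exists by \Cref{def1sec3} together with the cited \cite[Corollary 5.2]{wiemeler13}. Since $\varphi_C$ is automatically a combinatorial equivalence of nice manifolds with corners, for any face $F_0 \le \ol{C}$ the preimage $\varphi_C^{-1}(\ol{F_0})$ is a face of the simple polytope $P_C$; after identifying its affine span with $\R^{\dim F_0}$ this is a simple convex polytope, and the restriction $\varphi_{F_0} \vcentcolon= \varphi_C|_{\varphi_C^{-1}(\ol{F_0})}$ is a convex structure on $F_0$. The compatibility $\varphi_{F'_0}|_{\varphi_{F'_0}^{-1}(\ol{F_0})} = \varphi_{F_0}$ for $F_0 \le F'_0 \le \ol{C}$ is then immediate.

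Next I would extend to arbitrary faces. Because $\ol{C}$ is a strict fundamental domain, every face $F$ of $\A_W$ has the form $wF_0$ for a \emph{unique} $F_0 \le \ol{C}$ and some (non-unique) $w \in W$; set $\varphi_F \vcentcolon= w \circ \varphi_{F_0}$. The crux of the argument is well-definedness: if $wF_0 = w'F_0$, then $w^{-1}w'$ stabilizes $F_0$ setwise, and by \Cref{thm1sec1} and the standard theory of Coxeter transformation groups this stabilizer equals the parabolic subgroup $W_{S(F_0)}$ generated by those simple reflections whose walls contain $F_0$. Each such generator fixes $\ol{F_0}$ pointwise, hence so does the whole subgroup, so $w^{-1}w' \circ \varphi_{F_0} = \varphi_{F_0}$ and the definition is independent of the choice of $w$. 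Since $w$ acts as a diffeomorphism on $X$, the composition $w\circ\varphi_{F_0}$ is again a diffeomorphism from the same polytope onto $\ol{F}$, so $\varphi_F$ is a genuine convex structure.

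Equivariance and compatibility then drop out formally. For equivariance, writing $F = w_0 F_0$, one has $\varphi_{wF} = (ww_0)\circ \varphi_{F_0} = w \circ \varphi_F$. For compatibility with $F < F'$, choose $w$ so that $w^{-1}F' = F'_0 \le \ol{C}$ and set $F_0 \vcentcolon= w^{-1}F$, which lies in $\ol{F'_0}$; then $\varphi_{F'}^{-1}(\ol{F}) = \varphi_{F'_0}^{-1}(\ol{F_0})$, and
\[
\varphi_{F'}\big|_{\varphi_{F'}^{-1}(\ol{F})} \;=\; w \circ \bigl(\varphi_{F'_0}\big|_{\varphi_{F'_0}^{-1}(\ol{F_0})}\bigr) \;=\; w \circ \varphi_{F_0} \;=\; \varphi_F,
\]
reducing to the compatibility already established inside $\ol{C}$. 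The only genuinely nontrivial step is the well-definedness of the propagation, which reduces entirely to the standard fact that the setwise stabilizer of a face of $\ol{C}$ is the corresponding parabolic subgroup and acts trivially on that face.
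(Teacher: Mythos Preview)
Your proposal is correct and follows essentially the same approach as the paper: fix a convex structure on the closed fundamental chamber, restrict to its faces, and propagate to all other faces via the $W$-action, with well-definedness reducing to the fact that the setwise stabilizer of a face of $\ol{C}$ is the associated parabolic subgroup and fixes that face pointwise. The paper's proof is terser (it organizes the propagation chamber-first rather than face-first, and cites \Cref{lem2sec4} for the stabilizer fact), but the content is the same.
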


\begin{proof}
Fix a fundamental chamber $C_0 \in \mathcal{C}(\A)$.
By the remark after \Cref{def1sec3}, we can give $C_0$ a convex structure $\varphi = \varphi_{C_0}$.
Now, for the chamber $wC_0$ for some $w \in W$, define the convex structure by $\varphi_{wC_0} = w \circ \varphi$.

For a face $F$ of $C_0$, restrict $\varphi$ to $\varphi^{-1}(\ol{F})$, that is, set $\varphi_F = \varphi|_{\varphi^{-1}(\ol{F})}$, and similarly for a face $F$ of $wC_0$.
It is enough to check well-definedness on faces $F$ of $C_0$. 
But if $F$ is a face of both $C_0$ and $wC_0$, then $w$ must fix $\ol{F}$ point-wise (since $F$ is fixed by a unique standard parabolic subgroup, see \Cref{lem2sec4}).
Hence, on $\varphi^{-1}(\ol{F})$, $w \circ \varphi = \varphi$.
\end{proof}

So fix a collection of convex structures $\varphi_F$ as above.
Let $\mathring\Delta^n$ denote the \emph{open} unit simplex in $\R^{n+1}$ and $\Delta^n$ the closed unit simplex.
If $x_0,x_1,\dots,x_n$ are points contained in $\ol{F}$ for some face $F$, and $(t_0,t_1,\dots t_n) \in \Delta^n$, then the point $\varphi_F\mleft(\sum t_i \varphi_F^{-1}(x_i)\mright)$ does not depend on the choice of $F$.
So we shall abuse notation and denote this by $\sum t_i x_i$, as long as $x_i$ are contained in the closure of some face $F$.
We then also have, for any $w \in W$, $w(\sum t_i x_i) = \sum t_i w(x_i)$.

Before we define the open cover $\{ U(F,C)\mid F\leq C \}$ of $M(\A)$, indexed by $(F,C) \in \Sal_0(\A)$, we will first cover $X$ by open sets $\omega(F)$ corresponding to $F \in \FA$.
Then $U(F,C)$ will be an open subset of $T\omega(F)$.

Throughout the rest of this section we use the following definitions and notations.
A \emph{chain of length $p+1$} in $\FA$ is a sequence $(F_0,\dots,F_p)$ in $\FA$ such that $F_0 < F_1 <\dots < F_p $.
We define two partial orders on chains.
Let $\gamma = (F_0,\dots,F_p)$ and $\gamma' = (F'_0,\dots,F'_q)$. 
First, $\gamma \le \gamma'$ if $F_0 = F'_0$ and $\{F_0,\dots,F_p\} \subseteq \{F'_0,\dots,F'_q\}$.
Second, $\gamma \preceq \gamma'$ if $p \le q$, and for $0 \le i \le p$, $F_{p-i} = F'_{q-i}$.
Note that if $F_0 = F_0'$ (in particular if $\gamma \le \gamma'$) and $\gamma \preceq \gamma'$ then $\gamma = \gamma'$.

For $F \in \FA$, denote by $\Chain(F)$ the set of chains $(F_0,\dots,F_p)$ such that $F = F_0$.
More generally, let $\Chain(\gamma)$ denote the set of chains $\gamma'$ such that $\gamma \le \gamma'$.
For a chain $\gamma = (F_0,F_1,\dots,F_p)$, and $w \in W$, let $w\gamma$ be the chain $(wF_0,wF_1,\dots,wF_p)$. 

For each $F \in \FA$, fix a point $x(F) \in F$ so that $wx(F) = x(wF)$ for all $w \in W$. For a given chain $\gamma = (F_0,F_1,\dots,F_p)$, let $x_i = x(F_i)$ and define the following subset of $F_p$:
\[\Theta(\gamma) = \set{t_0z_0 + t_1x_1 + \dots + t_px_p}{ z_0 \in F_0, (t_0,t_1,\dots,t_p) \in \mathring{\Delta}^p}\dispunct{.}\]

\begin{lemma}\label{lemma:inductive theta}
Let $\gamma = (F_0,F_1,\dots,F_p)$ be a chain for some $p>0$ and let $\gamma_1 = (F_0,F_1,\dots,F_{p-1})$.
Then
\[\Theta(\gamma) = \set{t z + (1-t)x(F_p)}{z \in \Theta(\gamma_1), 0 < t < 1} \dispunct{.}\]
\end{lemma}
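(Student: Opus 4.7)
The proof will proceed by showing the two set inclusions separately, and the argument is essentially an algebraic rearrangement of convex combinations once the well-definedness of the ``formal sum'' notation has been accounted for. Throughout, I will use the abbreviation $x_i = x(F_i)$ and note that every face $F_i$ for $i \le p$ satisfies $F_i \subseteq \overline{F_p}$, so all sums appearing below take place in $\overline{F_p}$ and are well-defined via the convex structure $\varphi_{F_p}$.

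For the inclusion $\supseteq$, I would start with $z \in \Theta(\gamma_1)$ and $t \in (0,1)$. By definition, $z = s_0 z_0 + s_1 x_1 + \dots + s_{p-1} x_{p-1}$ for some $z_0 \in F_0$ and $(s_0,\dots,s_{p-1}) \in \mathring\Delta^{p-1}$. Setting $t_i = t s_i$ for $0 \le i \le p-1$ and $t_p = 1-t$, one checks that all $t_i > 0$ and $\sum_{i=0}^{p} t_i = t + (1-t) = 1$, so $(t_0,\dots,t_p) \in \mathring\Delta^p$. Expanding
\[tz + (1-t)x_p = t_0 z_0 + t_1 x_1 + \dots + t_{p-1} x_{p-1} + t_p x_p\]
exhibits the right-hand side as an element of $\Theta(\gamma)$.

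For the reverse inclusion $\subseteq$, take an arbitrary element $y = t_0 z_0 + t_1 x_1 + \dots + t_p x_p \in \Theta(\gamma)$ with $z_0 \in F_0$ and $(t_0,\dots,t_p) \in \mathring\Delta^p$. Since $0 < t_p < 1$, set $t = 1 - t_p \in (0,1)$ and define $s_i = t_i/t$ for $0 \le i \le p-1$. Then $s_i > 0$ and $\sum_{i=0}^{p-1} s_i = (1-t_p)/t = 1$, so $(s_0,\dots,s_{p-1}) \in \mathring\Delta^{p-1}$. Putting $z = s_0 z_0 + s_1 x_1 + \dots + s_{p-1} x_{p-1} \in \Theta(\gamma_1)$, the identity $y = tz + (1-t) x_p$ follows by factoring $t$ out of the first $p$ terms.

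The one point requiring care—and which I expect to be the main subtlety rather than a genuine obstacle—is verifying that the formal sum notation $\sum t_i x_i$ really does produce a point of $F_p$ (the definition asserts $\Theta(\gamma) \subseteq F_p$, not merely $\overline{F_p}$), so that the claimed equalities live in the correct set. Under $\varphi_{F_p}^{-1}$, the face $F_p$ corresponds to the interior of the simple convex polytope $P$ while the proper faces $F_i$ map to the boundary of $P$; since $x_p \in F_p$ corresponds to an interior point and its coefficient $t_p = 1-t$ is strictly positive, any such convex combination lies in the interior of $P$ and hence in $F_p$. This compatibility is precisely what the preceding lemma on coherent convex structures guarantees, and it ensures that both inclusions above take place within $F_p$ as required.
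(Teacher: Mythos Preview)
Your proof is correct and follows essentially the same approach as the paper's. The paper compresses both inclusions into the single observation that $\mathring\Delta^p = \set{(tx,1-t)}{x \in \mathring\Delta^{p-1},\, 0<t<1}$ together with the definition of $\Theta(\gamma_1)$, whereas you spell out the two directions and the coefficient substitutions explicitly; your added remark about the combination landing in $F_p$ rather than merely $\overline{F_p}$ is a valid clarification that the paper leaves implicit.
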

\begin{proof}
Let $x_i = x(F_i)$.
Using that $\mathring\Delta^p = \set{(tx,1-t)}{x \in \mathring\Delta^{p-1},0<t<1}$, it is enough to note that by definition,
\[\Theta(\gamma_1) = \set{\mleft(t_0z_0 + t_1x_1 + \dots + t_{p-1}x_{p-1}\mright)}{ z_0 \in F_0, (t_0,t_1,\dots,t_{p-1}) \in \mathring\Delta^{p-1}} \dispunct{.} \qedhere\]
\end{proof}

\begin{lemma}\label{lemma:theta equivariant}
For any $w \in W$, and any chain $\gamma$,
\[w\Theta(\gamma) = \Theta(w\gamma)\dispunct{.}\]
\end{lemma}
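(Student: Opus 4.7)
The plan is to prove this directly from the definition of $\Theta$ by pushing the group action through the convex-combination notation term-by-term, using the two equivariance properties already established: the equivariance of the chosen basepoints ($wx(F) = x(wF)$) and the equivariance of convex combinations ($w(\sum t_i x_i) = \sum t_i w(x_i)$ whenever the $x_i$ lie in the closure of a common face).

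First, I would unpack what it means for a point $y$ to lie in $\Theta(\gamma)$. Writing $\gamma = (F_0,\dots,F_p)$ and $x_i = x(F_i)$, such a $y$ has the form
\[y = t_0 z_0 + t_1 x_1 + \dots + t_p x_p, \qquad z_0 \in F_0,\ (t_0,\dots,t_p) \in \mathring\Delta^p \dispunct{.}\]
Note that $z_0, x_1,\dots,x_p \in \ol{F_p}$ since $F_i \le F_p$ for all $i$, so the convex combination is well-defined in the sense described above. Applying $w$ and using equivariance of convex combinations,
\[wy = t_0 w(z_0) + t_1 w(x_1) + \dots + t_p w(x_p) \dispunct{.}\]

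Now I invoke the equivariance of the basepoints to rewrite $w(x_i) = w(x(F_i)) = x(wF_i)$, and observe that $w(z_0) \in wF_0$. Hence $wy$ has the required form to lie in $\Theta(w\gamma) = \Theta(wF_0, wF_1,\dots, wF_p)$, since the $w(x_i)$ are exactly the basepoints associated with the chain $w\gamma$ and $w(z_0)$ ranges over $wF_0$ as $z_0$ ranges over $F_0$. This shows $w\Theta(\gamma) \subseteq \Theta(w\gamma)$.

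The reverse inclusion follows immediately by symmetry: applying the same argument to $w^{-1}$ and the chain $w\gamma$ yields $w^{-1}\Theta(w\gamma) \subseteq \Theta(w^{-1} w\gamma) = \Theta(\gamma)$, so $\Theta(w\gamma) \subseteq w\Theta(\gamma)$. There is no real obstacle here — the only subtle point is to confirm that the convex combination $\sum t_i w(x_i)$ again makes sense (i.e.\ that $w(z_0), w(x_1),\dots, w(x_p)$ all lie in a common closed face), which is automatic since they all lie in $w\ol{F_p} = \ol{wF_p}$.
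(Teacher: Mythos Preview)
Your proof is correct and follows essentially the same route as the paper's: show $w\Theta(\gamma)\subseteq\Theta(w\gamma)$ by pushing $w$ through the convex combination using $w(x(F_i))=x(wF_i)$ and the equivariance of the convex-structure sums, then get the reverse inclusion by applying the same argument to $w^{-1}$. If anything, you are slightly more careful than the paper in checking that the convex combinations remain well-defined after applying $w$.
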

\begin{proof}
Let $x_i = x(F_i)$ and $u \in \Theta(\gamma)$. 
By definition, $u = t_0z + \sum_{i=1}^p t_ix_i$ for some $z \in F_0$, and some $(t_0,t_1,\dots,t_p) \in \mathring\Delta^p$.
But then if $x_i' = x(wF_i) = w(x_i)$, we must have that $wu = t_0z + \sum_{i=1}^p t_i x_i'$, and so $wu \in \Theta(w\gamma)$.

Thus, $w\Theta(\gamma) \subseteq \Theta(w\gamma)$.
Similarly, $w^{-1}\Theta(w\gamma) \subseteq \Theta(\gamma)$, and so we are done.
\end{proof}

\begin{lemma}\label{lemma:thetas intersect then preceq}
Let $\gamma = (F_0,F_1,\dots,F_p)$ and $\gamma' = (F'_0,F'_1,\dots,F'_q)$ be two chains.
If $p \le q$ and $\Theta(\gamma) \cap \Theta(\gamma') \ne \emptyset$, then $\gamma \preceq \gamma'$.
\end{lemma}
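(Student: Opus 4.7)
The plan is to proceed by induction on $p$, peeling off the top face $F_p$ at each stage using \Cref{lemma:inductive theta} and exploiting the convex structure $\varphi_{F_p}$ to get a uniqueness statement.

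First, note that each $\Theta(\gamma)$ lies in the face $F_p$ (not merely in $\ol{F_p}$): in the defining formula $t_0 z_0 + \sum_{i=1}^{p} t_i x_i$, the coefficient $t_p$ is strictly positive since $(t_0,\dots,t_p) \in \mathring\Delta^p$, and $x_p \in F_p$ while the other contributions lie in $\ol{F_p}$; transferring to the polytope $P = \varphi_{F_p}^{-1}(\ol{F_p})$ shows the combination lies in the relative interior of $P$, hence in $F_p$. Similarly $\Theta(\gamma') \subseteq F'_q$. Since any two strata of the arrangement are disjoint, a common point forces $F_p = F'_q$, which settles the base case $p=0$ (there $\Theta(\gamma) = F_0$ and the statement $\gamma \preceq \gamma'$ just says $F_0 = F'_q$).

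For the inductive step, assume $p \ge 1$ and set $\gamma_1 = (F_0,\dots,F_{p-1})$, $\gamma'_1 = (F'_0,\dots,F'_{q-1})$. Let $u \in \Theta(\gamma) \cap \Theta(\gamma')$ and write $x_p := x(F_p) = x(F'_q)$. By \Cref{lemma:inductive theta} we have
\[u = \alpha\, y + (1-\alpha)\,x_p = \alpha'\, y' + (1-\alpha')\, x_p\]
for some $y \in \Theta(\gamma_1)$, $y' \in \Theta(\gamma'_1)$, and $\alpha,\alpha' \in (0,1)$. I want to conclude $y = y'$, so that the common point $y$ lies in $\Theta(\gamma_1) \cap \Theta(\gamma'_1)$; since $p-1 \le q-1$, the induction then yields $\gamma_1 \preceq \gamma'_1$, which together with $F_p = F'_q$ gives $\gamma \preceq \gamma'$.

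The main (and only delicate) step is therefore the uniqueness of $y$. Transferring via $\varphi_{F_p}$ to the polytope $P \subset \R^k$, the points $\tilde u = \varphi_{F_p}^{-1}(u)$ and $\tilde x_p = \varphi_{F_p}^{-1}(x_p)$ both lie in the interior of $P$, while $\tilde y, \tilde y'$ lie in $\partial P$ (since $y \in F_{p-1} \subseteq \ol{F_p}\setminus F_p$ and similarly for $y'$). The equation $\tilde u = \alpha \tilde y + (1-\alpha)\tilde x_p$ rewrites as $\tilde y = \tilde x_p + \tfrac{1}{\alpha}(\tilde u - \tilde x_p)$ with $\tfrac{1}{\alpha} > 1$, so $\tilde y$ is the unique intersection of the open ray from $\tilde x_p$ through $\tilde u$ with the boundary of the convex polytope $P$ (such a ray meets $\partial P$ in exactly one point, since $\tilde u \ne \tilde x_p$—the two points lie in different strata). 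The same formula applies to $\tilde y'$ using the same ray, giving $\tilde y = \tilde y'$ and hence $y = y'$, closing the induction.
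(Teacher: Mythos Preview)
Your proof is correct and follows essentially the same approach as the paper's: induct on $p$, use \Cref{lemma:inductive theta} to write $u$ two ways, and use the ray-from-interior-point argument in the convex polytope $P$ to force the boundary points $y,y'$ to coincide. One small slip: your reason for $\tilde u \ne \tilde x_p$ (``the two points lie in different strata'') is wrong---both $u$ and $x_p$ lie in $F_p$---but the inequality holds anyway since $\tilde u - \tilde x_p = \alpha(\tilde y - \tilde x_p)$ with $\alpha>0$ and $\tilde y \in \partial P$, $\tilde x_p$ interior.
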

\begin{proof}
Let $u \in \Theta(\gamma) \cap \Theta(\gamma')$.
Then $u \in F_p$ and $u \in F'_q$, and hence $F_p = F'_q$.
Now we proceed by induction on $p$.
If $p = 0$, we are already done, so assume $p \ge 1$ and the induction hypothesis.
Let $\gamma_1 = (F_0,F_1,\dots,F_{p-1})$ and $\gamma'_1 = (F'_0,F'_1,\dots,F'_{q-1})$.

Let $x = x(F_p)$.
Then $u = tz+(1-t)x = t'z' + (1-t')x$ for some $0<t,t' \le 1$, $z \in \Theta(\gamma_1)$, and $z' \in \Theta(\gamma'_1)$.
Thus, $z,z'$ are points in $\partial F_p$ and are on the ray $xu$.
Since $x$ is an interior point of $\ol{F_p}$, this implies that $z = z'$, and hence $\Theta(\gamma_1) \cap \Theta(\gamma'_1) \ne \emptyset$.
By the induction hypothesis, $\gamma_1 \preceq \gamma'_1$ and hence $\gamma \preceq \gamma'$.
\end{proof}

\begin{lemma}\label{lemma:preceq then theta subset}
Let $\gamma = (F_0,F_1,\dots,F_p)$ and $\gamma' = (F'_0,F'_1,\dots,F'_q)$ be two chains.
If $\gamma \preceq \gamma'$, then $\Theta(\gamma') \subseteq \Theta(\gamma)$.
\end{lemma}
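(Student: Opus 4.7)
The plan is to induct on $q$, the top index of the longer chain $\gamma'$, peeling off the top face via \Cref{lemma:inductive theta} to reduce to shorter chains. This is natural because \Cref{lemma:inductive theta} already encodes the only nontrivial convex-geometric content (how $\Theta$ grows when one extends a chain at the top), so the remaining task is purely combinatorial bookkeeping of the preorder $\preceq$.

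The base case $q = 0$ forces $p = 0$ as well; then $\gamma \preceq \gamma'$ gives $F_0 = F'_0$, so $\gamma = \gamma'$ (as observed in the excerpt) and the inclusion is trivial. For the inductive step, fix $\gamma' = (F'_0,\dots,F'_q)$ with $\gamma \preceq \gamma'$, and assume the result for all pairs whose longer chain has top index strictly less than $q$. The edge case $p = 0$ is immediate from the definitions: here $\gamma = (F_0)$, and $\gamma \preceq \gamma'$ forces $F_0 = F'_q$, so $\Theta(\gamma) = F_0 = F'_q$, which contains $\Theta(\gamma')$ by the observation that $\Theta(\gamma')$ is a subset of $F'_q$.

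Otherwise $p \ge 1$; set $\gamma_1 = (F_0,\dots,F_{p-1})$ and $\gamma'_1 = (F'_0,\dots,F'_{q-1})$. A direct index shift using $F_{p-i} = F'_{q-i}$ for $0 \le i \le p$ (replace $i$ by $i+1$, so $0 \le i \le p-1$) shows $\gamma_1 \preceq \gamma'_1$, so the induction hypothesis yields $\Theta(\gamma'_1) \subseteq \Theta(\gamma_1)$. For $u \in \Theta(\gamma')$, apply \Cref{lemma:inductive theta} to $\gamma'$ to write $u = tz + (1-t)\,x(F'_q)$ with $z \in \Theta(\gamma'_1)$ and $0 < t < 1$. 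Since $F'_q = F_p$ and $z \in \Theta(\gamma_1)$ by induction, \Cref{lemma:inductive theta} applied to $\gamma$ then exhibits $u$ as an element of $\Theta(\gamma)$, completing the induction.

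I do not anticipate a serious obstacle. The convex-geometric subtleties (e.g.\ why the relevant combinations actually land in the open face $F_p$) are all absorbed into \Cref{lemma:inductive theta} and into the definition of $\Theta(\gamma)$ as a subset of $F_p$; the only thing to watch is the edge case $p = 0$, where $\gamma_1$ would be an empty chain, which is why I handle it separately above.
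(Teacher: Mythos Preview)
Your proof is correct, but it takes a different route from the paper's. The paper gives a direct, non-inductive argument: given $u = t_0 z + \sum_{i=1}^q t_i x(F'_i) \in \Theta(\gamma')$, it simply regroups the first $q-p+1$ terms into a single point
\[
u' = \frac{1}{t'_0}\Bigl(t_0 z + \sum_{i=1}^{q-p} t_i x(F'_i)\Bigr), \qquad t'_0 = \sum_{i=0}^{q-p} t_i,
\]
observes that $u' \in F_0$ (since $x(F'_{q-p}) = x(F_0)$ appears with positive weight), and then reads off $u = t'_0 u' + \sum_{i=1}^p t_{q-p+i}\, x(F_i) \in \Theta(\gamma)$ directly from the definition. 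Your induction on $q$ via \Cref{lemma:inductive theta} is cleaner in that it treats that lemma as a black box and avoids touching the convex coordinates at all; the paper's version is shorter and more explicit but requires the small observation that the regrouped point lands in the \emph{open} face $F_0$. Both are perfectly fine proofs of the same fact.
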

\begin{proof}
If $p = q$, there is nothing to prove, so assume $p<q$.
For $i =1,2,\dots,q$, let $x_i = x(F'_i)$.

Now let $u \in \Theta(\gamma')$.
Then for some $(t_0,t_1,\dots,t_q) \in \mathring\Delta^q$, and $z \in F_0'$,
\[u = t_0 z + \sum\nolimits_{i=1}^{q} t_ix_i \dispunct{.}\]
Then let $t'_0 = \sum_{i=0}^{q-p}t_i > 0$ and 
\[u' = \frac{1}{t'_0}\mleft[t_0 z + \sum\nolimits_{i=1}^{q-p} t_ix_i\mright] \dispunct{,}\]
which is a point in $\ol{F'_{q-p}} = \ol{F_0}$.
Let $t'_i = t_{q-p+i}$ and note that $x(F_{i}) = x_{q-p+i}$.
But now $(t'_0,t_{q-p+1},\dots,t_q)$ is a point in $\mathring\Delta^p$ and so $u = t_0'u' + \sum_{i=1}^{p}t_i' x(F_i) \in \Theta(\gamma)$.
\end{proof}

For a given $F \in \FA$, set 
\[\omega(F) = \bigcup_{\mathclap{\gamma \in \Chain(F)}} \Theta(\gamma)\dispunct{.}\]
More generally, for a chain $\gamma$,
\[\omega(\gamma) = \bigcup_{\mathclap{\gamma' \in \Chain(\gamma)}} \Theta(\gamma') \dispunct{.}\]

\begin{lemma}\label{lemma:inductive omega}
Let $\gamma = (F_0,F_1,\dots,F_p)$ be a chain and $G \in \FA$ be any face with $\dim G > \dim F_p$. Then
\[\ol{G} \cap \omega(\gamma) = \set{t z + (1-t) x(G)}{ z \in \partial G \cap \omega(\gamma), 0 < t \le 1} \dispunct{.}\]
\end{lemma}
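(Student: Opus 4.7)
The plan is to prove the two inclusions separately, leveraging \Cref{lemma:inductive theta} for the inductive decomposition of $\Theta$-sets and the convex structure $\varphi_G$ on $\ol{G}$ for manipulating convex combinations of points in $\ol{G}$.

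For the forward inclusion, suppose $u \in \ol{G} \cap \omega(\gamma)$, so that $u \in \Theta(\alpha)$ for some $\alpha = (F_0, F_1, \ldots, F_p, H_1, \ldots, H_k) \in \Chain(\gamma)$. The same inductive argument appearing in the proof of \Cref{lemma:thetas intersect then preceq} shows that $u$ lies in the interior of the top face of $\alpha$; since that face meets $\ol{G}$, the regularity of the stratification forces it to be $\le G$. If the top face equals $G$ (so $k \ge 1$ and $H_k = G$), apply \Cref{lemma:inductive theta} to peel off the $x(G)$ contribution, writing $u = tz + (1-t)x(G)$ with $z \in \Theta(\alpha')$ for the truncated chain $\alpha' = (F_0, \ldots, F_p, H_1, \ldots, H_{k-1}) \in \Chain(\gamma)$; then $z$ lies in the interior of a proper subface of $G$, hence $z \in \partial G \cap \omega(\gamma)$. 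If instead the top face is strictly below $G$, then $u \in \partial G$ already and the required decomposition is given by $t = 1$, $z = u$.

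For the reverse inclusion, suppose $u = tz + (1-t)x(G)$ with $z \in \partial G \cap \omega(\gamma)$ and $0 < t \le 1$. Via $\varphi_G$, the segment from $x(G) \in G$ to $z \in \partial G$ lies in $P = \varphi_G^{-1}(\ol{G})$ by convexity of $P$, so $u \in \ol{G}$. When $t = 1$ nothing further is needed. When $t < 1$, expand $z \in \Theta(\beta)$ for some $\beta = (F_0', \ldots, F_q') \in \Chain(\gamma)$; since the interior of $F_q'$ meets $\partial G$, regularity forces $F_q' < G$, so $\beta^+ = (F_0', \ldots, F_q', G)$ is a chain and still lies in $\Chain(\gamma)$. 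Substituting the $\Theta$-expansion of $z$ into $u = tz + (1-t)x(G)$ produces a convex combination with strictly positive coefficients (summing to $1$) that exhibits $u \in \Theta(\beta^+) \subseteq \omega(\gamma)$.

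The main obstacle is the implication \emph{``if an open stratum of $\A_W$ meets $\ol{G}$, then it is $\le G$''}. This is guaranteed by the assumption in \Cref{def1sec3} that $\A_W$ induces the structure of a regular cell complex on $X$, under which the face poset faithfully records the closure relations among strata. Once this combinatorial input is available, the rest of the argument reduces to routine bookkeeping of affine coordinates within the convex model $P$ supplied by $\varphi_G$.
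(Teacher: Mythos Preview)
Your argument is correct and follows essentially the same route as the paper's proof: split into the two inclusions, handle the boundary case $u\in\partial G$ with $t=1$, and otherwise use \Cref{lemma:inductive theta} to peel off or append the top face $G$. One small notational point: you write a generic element of $\Chain(\gamma)$ as $\alpha=(F_0,\dots,F_p,H_1,\dots,H_k)$, but by definition a chain in $\Chain(\gamma)$ need only \emph{contain} $\{F_0,\dots,F_p\}$ with $F_0$ as its first entry---extra faces may be interspersed between the $F_i$, not just appended at the top. This does not affect the argument, since the only thing you use is that the top face $G$ does not belong to $\gamma$ (forced by $\dim G>\dim F_p$), so that truncating it still leaves a chain in $\Chain(\gamma)$; but the paper's notation $\delta=(G_0,\dots,G_q)$ with $G_q=G$ and the observation ``$G\notin\gamma$'' makes this step cleaner.
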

\begin{proof}
Suppose $u \in \ol{G} \cap \omega(\gamma)$.
If $u \in \partial G$, take $z = u$ and $t = 1$.
Otherwise $u \in \Theta(\delta)$ for some $\delta = (G_0,G_1,\dots,G_q) \in \Chain(\gamma)$ such that $G = G_q$.
Since $F_0 = G_0$, and $G$ is of higher dimension than $F_0$, $q>0$.
Define $\delta_1 = (G_0,G_1,\dots,G_{q-1})$.
Then by \Cref{lemma:inductive theta}, for some $z \in \Theta(\delta_1)$ and $0 < t < 1$, $u = tz + (1-t) y$.
But since $G \notin \gamma$, $\delta_1 \in \Chain(\gamma)$.
Thus $z \in \Theta(\delta_1) \subseteq \omega(\gamma)$, and so the forward inclusion is proved.

For the reverse inclusion, let $z \in \partial G \cap \omega(\gamma)$.
Clearly $z \in \omega(\gamma)$, so let $0 < t < 1$.
Now let $\delta_1 = (G_0,G_1,\dots,G_q) \in \Chain(\gamma)$ be such that $z \in \Theta(\delta_1)$.
Then $z \in G_q$, and hence $G_q < G$.
So $\delta = (G_0,G_1,\dots,G_q,G)$ is a chain and since $\gamma \le \delta_1 \le \delta$, $\delta \in \Chain(\gamma)$.
Now again by \Cref{lemma:inductive theta}, $tz + (1-t)y \in \Theta(\delta) \subseteq \ol{G} \cap \omega(\gamma)$.
\end{proof}

\begin{lemma}\label{lemma:omega is open}
Let $F \in \FA$. Then $\omega(F)$ is open in $X$.
\end{lemma}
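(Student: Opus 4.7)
Plan: We verify openness of $\omega(F)$ point-wise. Given $u \in \omega(F)$, there is a chain $\gamma = (F_0 = F, F_1, \ldots, F_p) \in \Chain(F)$ with $u \in \Theta(\gamma)$, and by definition $F_p$ is the unique face of $\A$ containing $u$. Because every chain in $\Chain(\gamma)$ begins with $F_0 = F$, we have $\omega(\gamma) \subseteq \omega(F)$, so it suffices to exhibit an open neighborhood of $u$ in $X$ contained in $\omega(\gamma)$.

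The first step is to analyze $\omega(\gamma) \cap F_p$ inside the convex chart $\varphi_{F_p}$. This set is the union, over all refinements $\gamma' = (F_0, H_1, \ldots, H_k, F_p) \in \Chain(\gamma)$ ending at $F_p$, of the sets $\Theta(\gamma')$. Under the chart, the saturated refinements contribute open simplicial joins of $\varphi_{F_p}^{-1}(F)$ with subsets of $\{\varphi_{F_p}^{-1}(x(H_i))\}$, and the non-saturated $\Theta(\gamma')$'s lie on the shared boundaries between these open joins. Their union therefore identifies with the interior of the convex hull of $\varphi_{F_p}^{-1}(F) \cup \{\varphi_{F_p}^{-1}(x(H)) : F < H \le F_p\}$, which is open in $F_p$. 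Since $u = t_0 z_0 + \sum t_i x_i$ with all $t_i > 0$, the image $\hat{u} = \varphi_{F_p}^{-1}(u)$ lies in this interior, providing an open neighborhood of $u$ within the cell $F_p$.

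To propagate the neighborhood across the other cells of $X$ meeting a small ball about $u$, I would iterate \Cref{lemma:inductive omega}, which expresses $\ol{H} \cap \omega(\gamma)$ for each cell $H > F_p$ as the cone from $x(H)$ over $\partial H \cap \omega(\gamma)$. Combining with the compatibility $\varphi_F = \varphi_{F'}|_{\varphi_{F'}^{-1}(\ol{F})}$ of the convex charts (established in the lemma preceding the one at hand) and the local product structure of the regular cell complex at $u$, the cone pieces in each $\ol{H}$ with $H \ge F_p$ fit together into an open neighborhood of $u$ in $X$. The main obstacle is precisely this gluing step: verifying that the iterated cones produce an open set in $X$, not merely in each $\ol{H}$ separately. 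The chart compatibility is what makes this work, as it renders the expressions $t_0 z_0 + \sum t_i x_i$ intrinsically well-defined across cells of different dimensions, so that the cone constructions across the various $\ol{H}$ match continuously along shared boundary strata.
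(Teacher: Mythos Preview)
Your proposal has two issues.

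First, the convex hull description in your first step is incorrect when $p > 1$. Chains $\gamma' \in \Chain(\gamma)$ must contain every $F_i$, so only faces $H$ comparable to \emph{all} of $F_1, \dots, F_{p-1}$ can occur as intermediate terms; your formula $\{x(H) : F < H \le F_p\}$ includes faces $H$ incomparable to some $F_i$, and those contribute nothing to $\omega(\gamma)$. Even with the range of $H$ corrected, identifying the union of the $\Theta(\gamma')$ with the interior of a single convex hull is a nontrivial geometric claim needing its own justification (for instance, one must check that the lower-dimensional $\Theta$'s sit exactly on the internal walls between the top-dimensional ones, which depends on the relative positions of the chosen points $x(H)$).

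Second, and more seriously, you yourself flag the gluing across cells as ``the main obstacle'' and do not carry it out. The detour through $\omega(\gamma)$ does not actually help: by \Cref{lemma:intersect of omegas is omega} one has $\omega(\gamma) = \bigcap_i \omega(F_i)$, so exhibiting $\omega(\gamma)$ as a neighbourhood of $u$ is no easier than showing each $\omega(F_i)$ open.

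The paper sidesteps both problems by working directly with $\omega(F)$ and exploiting the weak (CW) topology: a subset of $X$ is open precisely when its intersection with every closed cell is open in that cell. One shows by induction on $k$ that $\omega(F) \cap X^{d+k}$ is open in $X^{d+k}$, where $d = \dim F$. The base case $\omega(F) \cap X^d = F$ is immediate. For the inductive step, take a $(d+k)$-cell $G$: if $F \not< G$ the intersection is empty; otherwise \Cref{lemma:inductive omega}, applied with the length-one chain $(F)$, gives $\ol{G} \cap \omega(F) = \{tz + (1-t)x(G) : z \in \partial G \cap \omega(F),\ 0 < t \le 1\}$, which is open in $\ol G$ since $\partial G \cap \omega(F)$ is open in $\partial G$ by induction. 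No point-wise gluing is needed, and no separate analysis of $\omega(\gamma)\cap F_p$ is required because the base case is trivial.
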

\begin{proof}
Let $d = \dim F$ and for $k \ge 0$, let $X^{d+k}$ be the $(d+k)$-skeleton.
We prove by induction on $k$ that $X^{d+k} \cap \omega(F)$ is open in $X^{d+k}$.
The set $X^{d} \cap \omega(F) = F$ is of course open in $X^d$, so assume $k \ge 1$ and the induction hypothesis.
Consider a face $G$ of dimension $d+k$ and let $x = x(G)$.

If $F \not < G$, then $\ol{G} \cap \omega(F) = \emptyset$.
Otherwise, by the induction hypothesis, $B = \omega(F) \cap \partial G$ is open in $\partial G$.
Then $\ol{G} \cap \omega(F) = \set{tz+(1-t)x}{z \in B, 0 < t \le 1}$ is open in $\ol{G}$.
So $\omega(F) \cap X^{d+k}$ is open in $X^{d+k}$.
\end{proof}

\begin{lemma}\label{lemma:omega inclusion reversing}
Let $\gamma = (F_0,F_1,\dots,F_p)$ and $\gamma' = (F'_0,F'_1,\dots,F'_q)$ be two chains.
If $\{F_0,F_1,\dots,F_p\} \subseteq \{F'_0,F'_1,\dots,F'_q\}$, then $\omega(\gamma') \subseteq \omega(\gamma)$.
\end{lemma}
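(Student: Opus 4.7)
The plan is to reduce the statement directly to \Cref{lemma:preceq then theta subset} by extracting a suitable tail sub-chain. Fix an arbitrary $\delta' = (G_0, G_1, \ldots, G_m) \in \Chain(\gamma')$; since $\omega(\gamma') = \bigcup_{\delta' \in \Chain(\gamma')} \Theta(\delta')$, it suffices to prove $\Theta(\delta') \subseteq \omega(\gamma)$ for every such $\delta'$.

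By definition of $\Chain(\gamma')$ we have $G_0 = F'_0$ and $\{F'_0,\ldots,F'_q\} \subseteq \{G_0,\ldots,G_m\}$. Combined with the hypothesis $\{F_0,\ldots,F_p\} \subseteq \{F'_0,\ldots,F'_q\}$, this gives $\{F_0,\ldots,F_p\} \subseteq \{G_0,\ldots,G_m\}$. In particular, there is a unique index $k$ with $F_0 = G_k$. I will let $\delta$ denote the truncated chain $(G_k, G_{k+1},\ldots,G_m)$ and argue that $\gamma \le \delta$ and $\delta \preceq \delta'$.

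For $\gamma \le \delta$: the minimum of $\delta$ is $F_0$ by construction, so I only need $\{F_0,\ldots,F_p\} \subseteq \{G_k,\ldots,G_m\}$. Each $F_i$ already lies in $\{G_0,\ldots,G_m\}$, and since $F_i \ge F_0 = G_k$ while the chain $G_0 < G_1 < \cdots < G_m$ is strictly increasing in the face order, any equality $F_i = G_l$ forces $l \ge k$. For $\delta \preceq \delta'$: this is immediate from the definition of $\preceq$, since $\delta$ has length $m-k+1 \le m+1$ and the two chains agree term-by-term from the top, i.e.\ $\delta_{(m-k)-i} = G_{m-i} = \delta'_{m-i}$ for $0 \le i \le m-k$.

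Applying \Cref{lemma:preceq then theta subset} to $\delta \preceq \delta'$ yields $\Theta(\delta') \subseteq \Theta(\delta)$, and then $\gamma \le \delta$ gives $\delta \in \Chain(\gamma)$, so $\Theta(\delta) \subseteq \omega(\gamma)$. Chaining these inclusions finishes the proof. I do not anticipate any genuine obstacle: once the right truncation of $\delta'$ is identified, the statement is essentially bookkeeping on top of the two preceding lemmas, and no further geometric input is required.
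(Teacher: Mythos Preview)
Your proof is correct and follows essentially the same approach as the paper: both arguments take an arbitrary $\delta' \in \Chain(\gamma')$, truncate it at the index where $F_0$ appears to obtain $\delta \in \Chain(\gamma)$ with $\delta \preceq \delta'$, and then invoke \Cref{lemma:preceq then theta subset}. You supply more detail in verifying $\gamma \le \delta$ and $\delta \preceq \delta'$ than the paper does, but the strategy is identical.
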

\begin{proof}
By \Cref{lemma:preceq then theta subset}, it is enough to show that for any $\delta' \in \Chain(\gamma')$, there is some $\delta \in \Chain(\gamma)$ such that $\delta \preceq \delta'$.
Let $\delta' = (G_0,G_1,\dots,G_r)$.
Since $F_0 \in \{F'_0,F'_1,\dots,F'_q\} \subseteq \{G_0,G_1,\dots,G_r\}$, let $F_0 = G_s$ for some $s \le r$.
Then $\delta = (G_s,G_{s+1},\dots,G_r)$ works.
\end{proof}

\begin{lemma}\label{lemma:thetas unique in omega}
If $u \in \omega(F)$ then there is a unique chain $\gamma \in \Chain(F)$ such that $u \in \Theta(\gamma)$.
\end{lemma}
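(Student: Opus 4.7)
The claim has two parts: existence and uniqueness. Existence is immediate from the definition $\omega(F) = \bigcup_{\gamma \in \Chain(F)} \Theta(\gamma)$, so the substantive content is uniqueness, and this is really just an application of \Cref{lemma:thetas intersect then preceq} combined with the structural observation already noted in the paragraph defining $\preceq$.

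Concretely, the plan is the following. Suppose $u \in \omega(F)$ lies in both $\Theta(\gamma)$ and $\Theta(\gamma')$ for two chains $\gamma = (F_0,\dots,F_p)$ and $\gamma' = (F'_0,\dots,F'_q)$ in $\Chain(F)$, so in particular $F_0 = F = F'_0$. Without loss of generality assume $p \le q$. Since $\Theta(\gamma) \cap \Theta(\gamma') \ne \emptyset$, \Cref{lemma:thetas intersect then preceq} gives $\gamma \preceq \gamma'$, which by definition means $F_{p-i} = F'_{q-i}$ for $0 \le i \le p$. Taking $i = p$ yields $F_0 = F'_{q-p}$; but $F_0 = F'_0$ and the $F'_j$ are strictly increasing, so $q - p = 0$ and hence $\gamma = \gamma'$. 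This is exactly the observation recorded just after the definition of $\preceq$, applied here.

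The argument is essentially a one-liner once the preparatory lemmas are in place, and I do not anticipate any real obstacle. The only subtlety worth mentioning is that one must remember to apply the WLOG ordering on lengths before invoking \Cref{lemma:thetas intersect then preceq}, since that lemma is stated with an asymmetric hypothesis $p \le q$; the symmetric conclusion on chains then follows because $F_0 = F'_0$ forces the two chains to have equal length.
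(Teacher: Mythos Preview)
Your proposal is correct and is essentially identical to the paper's proof: both reduce uniqueness to \Cref{lemma:thetas intersect then preceq} plus the observation (recorded immediately after the definition of $\preceq$) that $F_0 = F_0'$ together with $\gamma \preceq \gamma'$ forces $\gamma = \gamma'$. The only difference is that you spell out why the lengths must coincide, whereas the paper simply invokes membership in $\Chain(F)$.
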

\begin{proof}
Let $\gamma,\gamma' \in \Chain(F)$ such that $u \in \Theta(\gamma) \cap \Theta(\gamma')$.
Then by \Cref{lemma:thetas intersect then preceq}, without loss of generality, $\gamma \preceq \gamma'$.
But since both are in $\Chain(F)$, we must have $\gamma = \gamma'$.
\end{proof}

\begin{lemma}\label{lemma:omegas intersect then preceq}
Let $F,G \in \FA$.
If $\omega(F) \cap \omega(G) \ne \emptyset$, then $F \le G$ or $G \le F$.
\end{lemma}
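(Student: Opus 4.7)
The plan is to unpack the definition of $\omega(F)$ and $\omega(G)$, pick representative chains through $F$ and $G$ whose $\Theta$-sets meet, and then invoke \Cref{lemma:thetas intersect then preceq} to deduce the desired containment of faces.

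Concretely, suppose $u \in \omega(F) \cap \omega(G)$. By definition of $\omega$ there exist chains $\gamma = (F_0, F_1, \dots, F_p) \in \Chain(F)$ and $\delta = (G_0, G_1, \dots, G_q) \in \Chain(G)$ (so $F_0 = F$ and $G_0 = G$) with $u \in \Theta(\gamma) \cap \Theta(\delta)$. Without loss of generality, by symmetry, assume $p \le q$. Then \Cref{lemma:thetas intersect then preceq} applies directly and gives $\gamma \preceq \delta$.

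Unwinding the definition of $\preceq$, this means $F_{p-i} = G_{q-i}$ for every $0 \le i \le p$; taking $i = p$ yields $F = F_0 = G_{q-p}$. Since $\delta$ is a chain, $G = G_0 \le G_{q-p} = F$, which is one of the two conclusions. If instead $q \le p$, the symmetric argument produces $F \le G$. Thus in either case one of $F \le G$ or $G \le F$ holds, completing the proof.

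The argument is essentially a bookkeeping reduction to \Cref{lemma:thetas intersect then preceq}; the only place where care is needed is the asymmetric definition of $\preceq$ (which compares chains ``from the top''), so one must remember that $\gamma \preceq \delta$ forces the minimum element of $\gamma$ to sit inside $\delta$ as an \emph{intermediate} face $G_{q-p}$, rather than as its minimum. Once that is observed, the containment relation between $F$ and $G$ falls out immediately from the chain order on $\delta$.
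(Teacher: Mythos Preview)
Your proof is correct and follows essentially the same approach as the paper's: pick a point in the intersection, lift it to chains in $\Chain(F)$ and $\Chain(G)$, and apply \Cref{lemma:thetas intersect then preceq}. The only difference is that you spell out explicitly how $\gamma \preceq \delta$ forces $F = G_{q-p}$ and hence $G \le F$, whereas the paper leaves this unwinding to the reader.
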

\begin{proof}
Let $u \in \omega(F) \cap \omega(G)$.
Then for some $\gamma = (F_0,F_1,\dots,F_p) \in \Chain(F)$ and $\gamma' = (F'_0,F'_1,\dots,F'_q) \in \Chain(G)$, $u \in \Theta(\gamma) \cap \Theta(\gamma')$.
In particular, $F_0 = F$ and $F'_0 = G$.
But by \Cref{lemma:thetas intersect then preceq}, either $\gamma \preceq \gamma'$ or $\gamma' \preceq \gamma$.
Thus, either $F \le G$ or $G \le F$.
\end{proof}

By induction, we get:
\begin{lemma}\label{lemma:omegas intersect then chain}
Let $F_0,F_1,\dots,F_p \in \FA$ be such that $\omega(F_0) \cap \omega(F_1)\cap \dots \cap \omega(F_p) \ne \emptyset$. Then up to a permutation of indices, $F_0 \le F_1 \le \dots \le F_p$.
\end{lemma}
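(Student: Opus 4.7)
The plan is to induct on $p$, taking the case $p = 1$ as the base, which is precisely the preceding \Cref{lemma:omegas intersect then preceq}. The crucial observation driving the induction is that if $\omega(F_0) \cap \omega(F_1) \cap \dots \cap \omega(F_p) \ne \emptyset$, then for any two indices $i,j$ the pairwise intersection $\omega(F_i) \cap \omega(F_j)$ contains the full intersection and is therefore also nonempty. Applying the previous lemma to every pair yields that either $F_i \le F_j$ or $F_j \le F_i$; i.e., the faces $F_0,\dots,F_p$ are pairwise comparable in $\FA$.

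Next, I would argue that a finite pairwise-comparable subset of a poset is linearly ordered by the induced relation, so there exists an index $\sigma(0)$ with $F_{\sigma(0)} \le F_j$ for every $j$. Removing this face from consideration, the remaining $p$ faces still have nonempty common $\omega$-intersection (it contains the original one), so the induction hypothesis supplies a permutation arranging them into a chain $F_{\sigma(1)} \le F_{\sigma(2)} \le \dots \le F_{\sigma(p)}$. Concatenating with $F_{\sigma(0)}$ at the bottom produces the desired total ordering $F_{\sigma(0)} \le F_{\sigma(1)} \le \dots \le F_{\sigma(p)}$.

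There is no serious obstacle here; the statement is essentially the transitive closure of the pairwise comparability guaranteed by \Cref{lemma:omegas intersect then preceq}. The only point requiring care is the elementary poset fact that pairwise comparability on a finite set upgrades to a total order on that set, which is immediate since $(\FA, \le)$ is itself a poset and antisymmetry passes to the subset. No new geometric input about $\Theta$, convex structures, or $\omega$ is needed beyond what the previous lemmas already supply.
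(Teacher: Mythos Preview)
Your proposal is correct and matches the paper's approach: the paper simply states ``By induction, we get'' before the lemma, relying on \Cref{lemma:omegas intersect then preceq} exactly as you do. Your write-up spells out the elementary poset argument (pairwise comparability on a finite subset upgrades to a total order) that the paper leaves implicit, but there is no substantive difference.
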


\begin{lemma}\label{lemma:intersect of omegas is omega}
Let $\gamma = (F_0,F_1,\dots,F_p)$ be a chain. Then
\[\omega(F_0) \cap \omega(F_1)\cap \dots \cap \omega(F_p) = \omega(\gamma) \dispunct{.}\]
\end{lemma}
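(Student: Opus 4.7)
The plan is to prove the two inclusions separately, with the forward inclusion being essentially a direct corollary of the inclusion-reversing lemma and the reverse inclusion requiring the more delicate combinatorial observation about the total order induced by $\preceq$.

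For the forward inclusion $\omega(\gamma) \subseteq \omega(F_0) \cap \dots \cap \omega(F_p)$, I would apply \Cref{lemma:omega inclusion reversing} to $\gamma$ and the single-element chain $\gamma_i = (F_i)$ for each $i$. Since trivially $\{F_i\} \subseteq \{F_0, F_1, \dots, F_p\}$, the lemma gives $\omega(\gamma) \subseteq \omega((F_i)) = \omega(F_i)$, and intersecting over $i$ yields the desired inclusion.

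For the reverse inclusion, let $u$ be in the intersection $\omega(F_0) \cap \dots \cap \omega(F_p)$. By definition of $\omega(F_i)$ and by \Cref{lemma:thetas unique in omega}, for each $i$ there is a unique chain $\delta_i \in \Chain(F_i)$ with $u \in \Theta(\delta_i)$. By \Cref{lemma:thetas intersect then preceq}, the chains $\delta_0, \dots, \delta_p$ are pairwise comparable under $\preceq$, giving a total order on them. The key observation is that if $i < j$, then $\delta_i \preceq \delta_j$ is impossible: this would force $F_i$ (the first element of $\delta_i$) to appear in $\delta_j$ as an interior element, but every element of $\delta_j$ is $\geq F_j > F_i$ since $\delta_j$ is strictly increasing and begins with $F_j$. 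Thus $\delta_p \preceq \delta_{p-1} \preceq \dots \preceq \delta_0$.

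Once this total order is established, each $\delta_i$ is a tail of $\delta_0$, and in particular each $F_i$ appears in $\delta_0$. Since $\delta_0$ also begins with $F_0$, this shows $\gamma \le \delta_0$, i.e., $\delta_0 \in \Chain(\gamma)$. Hence $u \in \Theta(\delta_0) \subseteq \omega(\gamma)$, completing the proof. The main obstacle is the comparability argument in the reverse inclusion — specifically, correctly exploiting the strict increase $F_0 < F_1 < \dots < F_p$ together with the directionality of $\preceq$ (comparing tails, not heads) to conclude that $\delta_0$ is the maximal element and contains all the $F_i$'s; everything else is essentially an invocation of the preceding lemmas.
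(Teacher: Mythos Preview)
Your proof is correct and follows essentially the same route as the paper's: the forward inclusion via \Cref{lemma:omega inclusion reversing}, and the reverse by picking $\delta_i \in \Chain(F_i)$ with $u \in \Theta(\delta_i)$, then using \Cref{lemma:thetas intersect then preceq} together with $F_0 < F_i$ to force $\delta_i \preceq \delta_0$ and hence $F_i \in \delta_0$. The only cosmetic difference is that the paper compares $\delta_0$ directly with each $\delta_i$ (which is all that is needed), whereas you go a bit further and establish the full total order $\delta_p \preceq \cdots \preceq \delta_0$; either way the conclusion $\delta_0 \in \Chain(\gamma)$ follows.
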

\begin{proof}
By \Cref{lemma:omega inclusion reversing}, $\omega(\gamma) \subseteq \omega(F_0) \cap \omega(F_1)\cap \dots \cap \omega(F_p)$.
Conversely, let $u \in \omega(F_0) \cap \omega(F_1)\cap \dots \cap \omega(F_p)$.
Let $\delta_i \in \Chain(F_i)$ be such that, $u \in \Theta(\delta_i)$.
We want $\delta_0 \in \Chain(\gamma)$ and hence $u \in \omega(\gamma)$. 
It is enough to show that $F_i \in \delta_0$ for each $i=1,\dots,p$.
Now by \Cref{lemma:thetas intersect then preceq}, either $\delta_0 \preceq \delta_i$ or $\delta_i \preceq \delta_0$.
Since $F_0 < F_i$, the former cannot hold, and hence $F_i \in \delta_0$.
\end{proof}

\begin{lemma}\label{lemma:skeleton of omega}
Let $\gamma = (F_0,F_1,\dots,F_p)$ be a chain and let $F_p$ have dimension $d$.
Then $\omega(\gamma) \cap X^d \subseteq F_p$, where $X^d$ is the $d$-skeleton of $X$.
\end{lemma}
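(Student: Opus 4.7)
The plan is to argue directly from the definitions rather than by induction. Suppose $u \in \omega(\gamma) \cap X^d$. Unpacking $\omega(\gamma) = \bigcup_{\gamma' \in \Chain(\gamma)} \Theta(\gamma')$, pick a chain $\gamma' = (G_0, G_1, \ldots, G_r) \in \Chain(\gamma)$ with $u \in \Theta(\gamma')$. Since $\gamma \le \gamma'$, the top face $F_p$ of $\gamma$ appears among the $G_j$'s; say $F_p = G_s$ for some $s \le r$. In particular $F_p \le G_r$, so $\dim G_r \ge d$.

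The main point I would establish is the containment $\Theta(\gamma') \subseteq G_r$ (into the \emph{open} top face). Working through the convex structure $\varphi_{G_r} \from P \to \ol{G_r}$, any $u = t_0 z_0 + t_1 x(G_1) + \dots + t_r x(G_r)$ with $(t_i) \in \mathring\Delta^r$ is the image under $\varphi_{G_r}$ of a convex combination in the polytope $P$ whose weights are all strictly positive. Among the points being combined, $x(G_r) \in G_r$ is interior to $\overline{G_r}$ (so $\varphi_{G_r}^{-1}(x(G_r))$ is interior to $P$) and appears with positive weight $t_r$; hence the combination lies in the interior of $P$, and $u \in G_r$. This is the only step with any real content, and the argument is a standard convex-geometry fact transported through $\varphi_{G_r}$.

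Given $u \in G_r$, the condition $u \in X^d$ forces $\dim G_r \le d$, since the open cell $G_r$ is disjoint from every skeleton of dimension strictly below its own. Combined with $\dim G_r \ge d$ from above, we get $\dim G_r = d$, and then $F_p \le G_r$ with equal dimensions forces $G_r = F_p$. Therefore $u \in G_r = F_p$, which is what we wanted. The only mild edge case is $p = 0$, where the same argument shows that any $\gamma' \in \Chain(\gamma)$ contributing a point to $X^d$ must itself be $(F_0)$, so $\omega(\gamma) \cap X^d = F_0$.
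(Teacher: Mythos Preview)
Your proof is correct and follows essentially the same route as the paper's: pick the face $G_r$ (the top of a chain $\gamma'\in\Chain(\gamma)$ containing $u$), use $\Theta(\gamma')\subseteq G_r$ together with $F_p\le G_r$, and let the dimension constraint from $u\in X^d$ force $G_r=F_p$. The only difference is that you spell out why $\Theta(\gamma')$ lands in the \emph{open} top face $G_r$, whereas the paper simply takes this as given (it introduces $\Theta(\gamma)$ as ``the following subset of $F_p$'' and later uses ``$G_q=G$'' without further comment); your convexity argument is a fine justification of that point.
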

\begin{proof}
Consider $G \in \FA$ of dimension at most $d$, and let $u \in G \cap \omega(\gamma)$. 
Then there is some $\delta = (G_0,G_1,\dots,G_q) \in \Chain(\gamma)$ such that $u \in \Theta(\delta)$.
But then $G_q = G$ and since $F_p \subseteq \{G_0,G_1,\dots, G_q\}$, we must have $F_p = G_r$ for some $r \le q$.
In particular, $F_p \le G$.
But then the dimension assumption forces $F_p = G$, so $u \in F_p$.
\end{proof}

\begin{lemma}\label{lemma:omega restricts to contractible}
Let $\gamma = (F_0,F_1,\dots,F_p)$ be a chain. Then $\ol{F_p} \cap \omega(\gamma)$ is contractible.
\end{lemma}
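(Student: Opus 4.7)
The plan is to show that $\ol{F_p}\cap\omega(\gamma)$ lies inside the single open cell $F_p$, and that with respect to the convex structure $\varphi_{F_p}$ it is star-shaped about a conveniently chosen point of $\Theta(\gamma)$.

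First I would prove
\[\ol{F_p}\cap\omega(\gamma) = \bigcup_{\substack{\gamma' \in \Chain(\gamma) \\ \max(\gamma') = F_p}}\Theta(\gamma') \subseteq F_p\dispunct{.}\]
For any $\gamma' \in \Chain(\gamma)$ with top face $F'_q$, $\Theta(\gamma') \subseteq F'_q$. Since distinct open cells of a regular cell complex are disjoint, $\ol{F_p}\cap F'_q \ne \emptyset$ forces $F'_q \le F_p$; but $\gamma' \ge \gamma$ forces $F_p \in \gamma'$, hence $F'_q \ge F_p$, so $F'_q = F_p$.

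Next I would fix any $y_0 \in F_0$, set $u_0 = \tfrac{1}{p+1}y_0 + \tfrac{1}{p+1}\sum_{i=1}^{p} x(F_i) \in \Theta(\gamma)$, and prove that $\ol{F_p}\cap\omega(\gamma)$ is star-shaped about $u_0$ under $\varphi_{F_p}$. Given $u \in \ol{F_p}\cap\omega(\gamma)$, by the first step $u \in \Theta(\gamma')$ for some $\gamma' \ge \gamma$ with top $F_p$; write $u = a_0 z_0 + \sum_{G \in \gamma'\setminus\{F_0\}} a_G\, x(G)$ with $z_0 \in F_0$ and all $a_G > 0$ summing to $1$. Expanding $\lambda u + (1-\lambda)u_0$ for $\lambda \in [0,1]$, the two $F_0$-contributions combine via convexity of $F_0$ in $\varphi_{F_p}$-coordinates into a single term $(\lambda a_0 + \tfrac{1-\lambda}{p+1})\, z'_0$ with $z'_0 \in F_0$; the remaining coefficient on $x(G)$ is $\lambda a_G$ if $G \in \gamma'\setminus\gamma$ and $\lambda a_G + \tfrac{1-\lambda}{p+1}$ if $G \in \gamma\setminus\{F_0\}$. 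For $\lambda \in (0,1]$ every coefficient is strictly positive, so the combination lies in $\Theta(\gamma')$; at $\lambda = 0$ the coefficients on $G \in \gamma'\setminus\gamma$ vanish while the others stay positive, so the combination lies in $\Theta(\gamma)$. In either case it lies in $\omega(\gamma)$, and it lies in $\ol{F_p}$ by convexity of $\ol{F_p}$ under $\varphi_{F_p}$.

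Transporting the straight-line homotopy $H(u,\lambda) = (1-\lambda) u + \lambda u_0$ back through $\varphi_{F_p}$ then yields a continuous deformation of $\ol{F_p}\cap\omega(\gamma)$ to $\{u_0\}$, proving contractibility. The main obstacle is the bookkeeping in the star-shapedness step: one must verify strict positivity of every barycentric coefficient throughout the homotopy and combine the two $F_0$-contributions into a single point of $F_0$, both of which rely on the compatibility $\varphi_{F_p}|_{\varphi_{F_p}^{-1}(\ol{F_0})} = \varphi_{F_0}$ of the convex structures established in the earlier lemma.
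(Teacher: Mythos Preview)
Your proof is correct and follows essentially the same approach as the paper's: both first reduce to $F_p \cap \omega(\gamma)$ (the paper cites \Cref{lemma:skeleton of omega} for this, while you argue it directly), then choose a center point in $\Theta(\gamma)$ and verify star-convexity under $\varphi_{F_p}$ by tracking the barycentric coefficients along the straight-line homotopy. The only cosmetic difference is that the paper uses the specific point $x(F_0)$ where you allow an arbitrary $y_0 \in F_0$; the bookkeeping is otherwise identical.
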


\begin{proof}
By \Cref{lemma:skeleton of omega}, $\ol{F_p} \cap \omega(\gamma) = F_p \cap \omega(\gamma)$.
Let $x_i = x(F_i)$ and 
\[x = \frac1{p+1}(x_0+x_1+\dots+x_p) \in \Theta(\gamma)\dispunct{.}\]
Let $u \in \Theta(\delta) \cap F_p$ where $\delta = (G_0,G_1,\dots,G_q) \in \Chain(\gamma)$.
Then $F_0 = G_0$ and $F_p = G_q$. 
Let $u = \sum_{j=0}^q t_jz_j$ where $z_0 \in F_0 = G_0$, and $z_j = x(G_j)$ for $0<j \le q$.
Since $\delta \in \Chain(\gamma)$, each $y_i$ for $i>0$ occurs as some $z_j$.
Let $t \in (0,1)$.
Then $z_0' = tz_0 + (1-t)x_0 \in G_0$ and so $tx+(1-t)u$ is also a positive convex combination of $z_0'$ and $z_j$ for $j>0$, and hence $tx+(1-t)u \in \Theta(\delta) \subseteq F_p \cap \omega(\gamma)$.
So the segment $ux$ is contained in $\omega(\gamma)$.
This corresponds to star convexity under the convex structure of $F_p$ and hence the contractibility follows.
\end{proof}

\begin{lemma}\label{lemma:omega contractible}
Let $\gamma = (F_0,F_1,\dots,F_p)$ be a chain and let $F_p$ have dimension $d$.
Then $\omega(\gamma)$ deformation retracts to $w(\gamma) \cap X^d$, and hence is contractible.
\end{lemma}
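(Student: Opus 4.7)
The plan is to exhibit an explicit deformation retraction of $\omega(\gamma)$ onto $\omega(\gamma) \cap X^d$ by peeling off skeleta one dimension at a time, starting from the top. Since $X$ is an $l$-manifold, the skeleton $X^l = X$, so only finitely many steps (namely $l - d$) are required; we then invoke \Cref{lemma:omega restricts to contractible}, combined with $\omega(\gamma) \cap X^d \subseteq F_p$ from \Cref{lemma:skeleton of omega}, to conclude contractibility.

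The key step is the inductive retraction. Fix $k \ge 1$ and consider any face $G \in \F(\A)$ of dimension $d+k$. If $F_p \not< G$, then $\ol{G} \cap \omega(\gamma) = \emptyset$ (arguing as in the proof of \Cref{lemma:omega is open}, using that $F_p$ must appear in any $\delta \in \Chain(\gamma)$ whose $\Theta(\delta)$ meets $\ol G$). Otherwise, by \Cref{lemma:inductive omega}, every point $u \in \ol{G} \cap \omega(\gamma)$ has the form $u = tz + (1-t)x(G)$ with $z \in \partial G \cap \omega(\gamma)$ and $0 < t \le 1$. Since $x(G)$ lies in the interior of $\ol{G}$, this expression is unique in the convex structure $\varphi_G$ (any two such representations would produce distinct rays from $x(G)$ meeting at $u$, which is impossible by convexity of $\ol G$). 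Define a cell-wise retraction $r^G_s \from \ol{G} \cap \omega(\gamma) \to \ol{G} \cap \omega(\gamma)$ for $s \in [0,1]$ by
\[r^G_s(u) = \mleft((1-s)t + s\mright) z + \mleft(1-(1-s)t - s\mright)x(G)\dispunct{,}\]
so that $r^G_0 = \mathrm{id}$ and $r^G_1(u) = z \in \partial G \cap \omega(\gamma)$. These maps fix $\partial G \cap \omega(\gamma)$ pointwise (there $t = 1$), so as $G$ varies over the $(d+k)$-dimensional faces with $F_p < G$ and the identity is used on $X^{d+k-1} \cap \omega(\gamma)$, the maps glue to a continuous deformation retraction $r_k$ of $\omega(\gamma) \cap X^{d+k}$ onto $\omega(\gamma) \cap X^{d+k-1}$.

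Concatenating $r_1, r_2, \dots, r_{l-d}$ (reparametrising time into successive subintervals of $[0,1]$) yields a deformation retraction of $\omega(\gamma) = \omega(\gamma) \cap X^l$ onto $\omega(\gamma) \cap X^d$. By \Cref{lemma:skeleton of omega}, $\omega(\gamma) \cap X^d = F_p \cap \omega(\gamma) = \ol{F_p} \cap \omega(\gamma)$, and this set is contractible by \Cref{lemma:omega restricts to contractible}. Hence $\omega(\gamma)$ is contractible.

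The main technical point to justify carefully is the well-definedness and continuity of each $r^G_s$: one must check that $z$ and $t$ are genuinely unique for $u \in \ol G \cap \omega(\gamma)$ (this is where the hypothesis $\dim G > \dim F_p$ and the choice of $x(G)$ as an interior point of $\ol G$ are essential), and that $r^G_s$ stays inside $\omega(\gamma)$ throughout the homotopy, i.e.\ that the intermediate points $((1-s)t + s)z + (1 - (1-s)t - s)x(G)$ still lie in $\ol{G} \cap \omega(\gamma)$; this follows directly from the description given by \Cref{lemma:inductive omega}, since the coefficient of $z$ remains in $(0,1]$.
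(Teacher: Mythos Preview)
Your proposal is correct and follows essentially the same approach as the paper's proof: both retract $\omega(\gamma)$ skeleton by skeleton down to $\omega(\gamma)\cap X^d$, using the description of $\ol{G}\cap\omega(\gamma)$ from \Cref{lemma:inductive omega} on each top-dimensional face, and then invoke \Cref{lemma:omega restricts to contractible}. The paper merely says this retraction is ``straightforward from \Cref{lemma:inductive omega}'', whereas you have written out the explicit radial homotopy and checked the gluing and well-definedness; your additional care in noting that $x(G)\notin\omega(\gamma)$ (so that the radial coordinates $(z,t)$ are uniquely determined) is a genuine detail the paper omits.
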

\begin{proof}
By induction, it is enough to show that $\omega(\gamma) \cap X^{n-k+1}$ deformation retracts to $\omega(\gamma) \cap X^{n-k}$ for $1 \le k \le n-d$.

Let $G \in \FA$ be of codimension $k-1$.
It is then enough to show that $\omega(\gamma) \cap G$ deformation retracts to $\omega(\gamma) \cap \partial G$.
But this is straightforward from \Cref{lemma:inductive omega}.
\end{proof}

In particular, $\omega(F)$ deformation retracts to $F$ for any $F \in \FA$.
Further, the obvious choice of these deformation retracts is $W$-equivariant.
That is, we can pick the deformation retracts $r^s_F \from \omega(F) \to \omega(F)$, $s \in [0,1]$ such that for any $w \in W$, $w \circ r^s_F = r^s_{wF} \circ w$.
In particular, letting $r_F = r_F^1 \from \omega(F) \to F$, $r_{wF}\circ w = w \circ r_F$.

\begin{lemma}\label{lemma:omega equivariant}
Let $w \in W$ and $F \in \FA$ be such that $(w \cdot \omega(F)) \cap \omega(F) \ne \emptyset$.
Then $wF = F$.
\end{lemma}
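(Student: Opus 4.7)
The plan is to reduce this to the already-proved lemma on intersections of $\omega$'s (\Cref{lemma:omegas intersect then preceq}) by first showing $w \cdot \omega(F) = \omega(wF)$, and then using a dimension argument to upgrade the resulting order relation to equality.

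First I would observe that the equivariance property of $\Theta$ promotes to $\omega$ in the obvious way: since $w \Theta(\gamma) = \Theta(w\gamma)$ by \Cref{lemma:theta equivariant}, and since $\gamma \mapsto w\gamma$ gives a bijection $\Chain(F) \to \Chain(wF)$, we get
\[w \cdot \omega(F) \;=\; \bigcup_{\gamma \in \Chain(F)} w\Theta(\gamma) \;=\; \bigcup_{\gamma \in \Chain(F)} \Theta(w\gamma) \;=\; \omega(wF)\dispunct{.}\]
Consequently, the hypothesis $(w \cdot \omega(F)) \cap \omega(F) \ne \emptyset$ reads as $\omega(wF) \cap \omega(F) \ne \emptyset$.

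Next I would invoke \Cref{lemma:omegas intersect then preceq} to conclude that either $wF \le F$ or $F \le wF$. Finally, because $w$ acts on $X$ by a diffeomorphism that permutes the walls in $\A_W$ (it sends $X_r$ to $X_{wrw^{-1}}$), it preserves the stratification and in particular takes codimension-$d$ faces to codimension-$d$ faces, so $\dim wF = \dim F$. Since $F$ and $wF$ are open cells of the regular CW structure of equal dimension, the relation $F \le wF$ (or $wF \le F$) forces $F = wF$: a proper face in the closure of a cell must have strictly smaller dimension.

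The main (very minor) subtlety is just confirming that $w$ preserves dimensions of strata, which is immediate from its action on walls. No obstacle of substance arises here; the lemma is really a short corollary of \Cref{lemma:theta equivariant} combined with \Cref{lemma:omegas intersect then preceq}.
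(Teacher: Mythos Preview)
Your proof is correct and follows essentially the same route as the paper: establish $w\omega(F)=\omega(wF)$ from \Cref{lemma:theta equivariant} and the bijection $\Chain(F)\to\Chain(wF)$, invoke \Cref{lemma:omegas intersect then preceq}, and conclude by the dimension argument. The paper's version is terser (it simply asserts that $F$ and $wF$ have the same dimension), but the logic is identical.
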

\begin{proof}
First, $\gamma \in \Chain(F)$ iff $w\gamma \in \Chain(wF)$.
So, by \Cref{lemma:theta equivariant}, we have $w\omega(\gamma) = \omega(w\gamma)$.
Now by \Cref{lemma:omegas intersect then preceq}, either $F \le wF$ or vice versa.
But since $F$ and $wF$ are of the same dimension, $F = wF$.
\end{proof}

To construct $U(F,C)$, fix the face $F$.
By the local smoothness assumption, since $F$ is contractible, $TX|_{\ol{F}}$ is $W_F$-equivariantly isomorphic to $\ol{F} \times \R^n$ with $W_F$ acting on $\R^n$ as a linear reflection group.
We will identify the two bundles by this isomorphism.

For a point $x\in F$ the tangent space $T_xF$ has an intrinsically defined convex cone comprising of the `inner' tangent vectors, this is called the \emph{inner sector} (see \cite[Section~2]{joyce12} and \cite[Section~4.1]{michoretal07}).
Note that there exists a coordinate neighborhood of $x$ on which the restriction of $\A_x$ is a hyperplane arrangement.
The local smoothness assumption tells us that the local arrangement around $x$, under taking inner sectors, is combinatorially isomorphic to the arrangement in $T_xX$.
Let $C^T_F$ be the chamber in $\R^n$ (under the above identification) that is fiberwise given by the inner sector of $C_F$.

Now by definition of $r_F$, $T\omega(F)$ is $W$-equivariantly isomorphic to $r_F^*(TX|_F)$.
Let the corresponding bundle map be $\tilde r_F \from T\omega(F) \to F\times \R^n$ and define $U(F,C)$ to be $\tilde r_F^{-1} (F \times C^T_F)$.
By construction, $U(wF,wC) = wU(F,C)$ for any $w \in W$.

For $x \in F$, denote $\tilde r_F^{-1}(r_F(x) \times C^T_F)\cap T_xX = U(F,C) \cap T_xX$ by $T_x^+C_F$.
This is the inner sector of $C_F$ at $x$.

\begin{lemma}\label{lemma:tangent space inclusion}
Let $(F,C),(G,D) \in \Sal_0(\A)$ and $F \le G$. Let $x \in \omega(F) \cap \omega(G)$. Then $T_x^+C_F \subseteq T_x^+D_G$ iff $C_F \subseteq D_G$. Further, $T_x^+D_G$ is covered by the closures of $T_x^+C_F$ as $C$ varies over the chambers containing $F$ such that $C_F \subseteq D_G$.
\end{lemma}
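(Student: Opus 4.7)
The plan is to realize $T_x^+C_F$ and $T_x^+D_G$ as chambers of linear hyperplane arrangements on $T_xX$---namely, the tangent arrangements induced by $\A_F$ and $\A_G$ respectively---and then to exploit the subarrangement relation $\A_G \subseteq \A_F$ that follows from $F \le G$.

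First, I would verify that $T_x^+C_F$, defined as $\tilde r_F^{-1}(r_F(x) \times C_F^T) \cap T_xX$, coincides with the intrinsic inner sector at $x$ in the sense of Joyce: the open convex cone in $T_xX$ consisting of tangent vectors pointing into $C_F$. In a coordinate neighbourhood of $x$ linearizing the local arrangement $\A_x$, this inner sector is a chamber of the induced linear hyperplane arrangement on $T_xX$, and by the local smoothness assumption this chamber corresponds (under the combinatorial isomorphism of the local arrangement with its tangent arrangement) to $C_F$. The analogous statement holds for $T_x^+D_G$ with $\A_G$ in place of $\A_F$. Since $F \le G$ forces any wall through $G$ to pass through $F$, we have $\A_G \subseteq \A_F$, and so the linear arrangement on $T_xX$ induced by $\A_G$ is a subarrangement of that induced by $\A_F$.

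Given these identifications, both claims reduce to elementary facts about chambers of a subarrangement of a linear hyperplane arrangement. For the iff: $T_x^+C_F \subseteq T_x^+D_G$ as chambers of the respective linear arrangements is equivalent, via the combinatorial isomorphism, to the inclusion $C_F \subseteq D_G$ in $X$. For the covering statement: each chamber of the coarser arrangement is the union of the closures of the chambers of the finer arrangement that it contains, and chambers of $\A_F$ contained in $D_G$ correspond exactly to chambers $C$ of $\A$ with $F \le C$ and $C_F \subseteq D_G$.

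The main obstacle is the first verification: that the trivialization-dependent construction of $T_x^+C_F$ agrees with the intrinsic inner sector in $T_xX$. This requires compatibility between the isomorphism $TX|_{\ol F} \cong \ol F \times \R^n$ (restricted to the fiber at $r_F(x)$) and the combinatorial isomorphism between the local arrangement around $r_F(x)$ and the linearized arrangement on $\R^n$. Once this compatibility is secured, for $x \in \omega(F) \cap \omega(G)$ both $T_x^+C_F$ and $T_x^+D_G$ are well-defined chambers in $T_xX$ independent of the choice of trivialization or retraction, and the comparison becomes purely combinatorial.
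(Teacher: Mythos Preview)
The proposed verification in the first step fails: for a general $x \in \omega(F)$, the set $T_x^+C_F$ is \emph{not} the intrinsic inner sector of $C_F$ at $x$. By definition, $T_x^+C_F$ is the preimage of $C_F^T$ under the fiber isomorphism $T_xX \to \R^n$ coming from $\tilde r_F$; hence it is always a cone bounded by \emph{all} the hyperplanes of the $W_F$-arrangement on $\R^n$, transported to $T_xX$. But if $x$ lies in a face $H$ with $F < H$ (as is typically the case for $x \in \omega(F)\cap\omega(G)$ with $F < G$), only the walls of $\A_H \subsetneq \A_F$ actually pass through $x$, so the intrinsic inner sector of $C_F$ at $x$ is a chamber of the strictly coarser $\A_H$-tangent arrangement and is strictly larger than $T_x^+C_F$. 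The additional hyperplanes bounding $T_x^+C_F$ are artifacts of the chosen trivialization and retraction $r_F$, not intrinsic data at $x$. Consequently your reduction of both claims to ``elementary facts about subarrangements'' on $T_xX$ does not go through: the two cones you want to compare are not chambers of the tangent arrangements you name.

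The paper avoids this by never attempting an intrinsic description at a general $x$. It first reduces to $x \in G$, using that $r_F = r_F \circ r_G$ on $\omega(F)\cap\omega(G)$ (so the fiber isomorphism from $\tilde r_G$ carries $T_x^+C_F$ to $T_{r_G(x)}^+C_F$ and $T_x^+D_G$ to $T_{r_G(x)}^+D_G$). At $x \in G$ one genuinely has $T_x^+D_G = D_G^T$; and while $T_x^+C_F$ is still bounded by non-intrinsic hyperplanes, the $W_G$-equivariance of the fiber map $T_xX \to T_{r_F(x)}X$ (inherited from the $W_F$-equivariance of $r_F$ via $W_G \le W_F$) forces $T_x^+C_F$ to lie inside a single $W_G$-chamber. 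A connectedness argument on $\omega(F)\cap\ol G$ then identifies that chamber by sliding $x$ down to a point of $F \subseteq \ol G$, where both cones are honest inner sectors and the inclusion $C_F \subseteq D_G$ can be read off directly. The covering statement follows from the same equivariance.
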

\begin{proof}
By construction, $r_G(x) \in \omega(F)$, $r_F(x) = r_F(r_G(x))$, and the homotopy between the identity of $\omega(F) \cap \omega(G)$ and $r_F|_{\omega(F) \cap \omega(G)}$ goes through $r_G|_{\omega(F) \cap \omega(G)}$.
So it is enough to show the statement for $r_G(x)$.
That is, we can assume $x \in G \cap \omega(F)$.
In this case, $T_x^+D_G = D^T_G$.

But then, since $W_G \subseteq W_F$, and $r_F$ is equivariant under $W_F$ and in particular $W_G$, we have that the fiber isomorphism $T_xX \to T_{r_F(x)}X$ preserves the chambering by $W_G$.
In particular, for a chamber $C \ge F$, $T_x^+C_F$ intersects $D_G^T$ iff $T_x^+C_F \subseteq D_G^T$.

Now by a connectedness argument on $\omega(F) \cap \ol{G}$, $D_G^T$ contains $T_x^+C_F$ iff it contains $C_F^T$.
Using the definition of $C_F^T$ and $D_G^T$ on the tangent space of a point on $F \subseteq \ol{G}$, we see that this happens iff $C_F \subseteq D_G$.
\end{proof}

\begin{lemma}\label{lemma:U in complement}
Let $(F,C) \in \Sal_0(\A)$.
Then $U(F,C) \subseteq M(\A)$.
\end{lemma}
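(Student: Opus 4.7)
The plan is to show that any tangent vector $v \in U(F,C)$, sitting over some base point $x \in \omega(F)$, cannot lie in $TX_r$ for any reflection $r \in R$. I would break this into three steps: locate which walls can possibly contain $x$, understand how $\tilde r_F$ acts on those walls, and then observe that the inner sector $C_F^T$ misses the corresponding hyperplanes in $\R^n$.

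The first step is the observation that any wall $X_r$ passing through a point $x \in \omega(F)$ must contain $F$. Indeed, by definition $\omega(F) = \bigcup_{\gamma \in \Chain(F)} \Theta(\gamma)$, and for any chain $\gamma = (F_0,F_1,\dots,F_p)$ with $F_0 = F$, we have $\Theta(\gamma) \subseteq F_p$; thus every point of $\omega(F)$ lies in the (unique) face $F' \ge F$ that contains it. Since $X_r$ is a union of faces of $\F(\A)$, if $x \in X_r$ then the face $F'$ with $x \in F'$ is contained in $X_r$, whence $F \le F' \subseteq X_r$.

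Next, I would analyse $\tilde r_F$ on wall subbundles. For a wall $X_r \supseteq F$, the reflection $r$ fixes $F$ pointwise, so $r \in W_F$. The equivariance relation $w \circ r_F = r_{wF} \circ w$ applied to $w = r$ (using $rF = F$) shows that $r_F$ commutes with $r$, and hence that its derivative intertwines the $r$-actions on $T_x X$ and $T_{r_F(x)} X$. Since the $r$-fixed subspaces of these tangent spaces are $T_x X_r$ and $T_{r_F(x)} X_r$ respectively, $d r_F$ carries $T_x X_r$ isomorphically onto $T_{r_F(x)} X_r$. Under the trivialization $TX|_{\ol F} \cong \ol F \times \R^n$, the latter is precisely $\{r_F(x)\} \times H_r$, where $H_r \subseteq \R^n$ is the fixed hyperplane of $r$ viewed as an element of the linear action of $W_F$ on $\R^n$. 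Thus $\tilde r_F\bigl(TX_r \cap T\omega(F)\bigr) \subseteq F \times H_r$.

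The final step is immediate from the construction of $C_F^T$. By the local smoothness and the identification with $T_x X$ for $x \in F$, the inner sector $C_F^T$ is an open chamber in the complement of the local arrangement $\{H_r : X_r \supseteq F\}$ in $\R^n$; in particular $C_F^T \cap H_r = \emptyset$ for every wall $X_r \supseteq F$. Suppose now, for contradiction, that some $v \in U(F,C)$ lies in $TX_r$. Its base point $x$ then lies on $X_r$, and by step one $F \subseteq X_r$. By step two, $\tilde r_F(v) \in F \times H_r$, while by the very definition $U(F,C) = \tilde r_F^{-1}(F \times C_F^T)$ forces $\tilde r_F(v) \in F \times C_F^T$, contradicting step three. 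I expect step two to be the only subtle point, since it relies on the $W$-equivariance of the chosen retraction $r_F$ being strong enough to preserve the wall subbundles fiberwise; the other steps are essentially bookkeeping with the combinatorics already established.
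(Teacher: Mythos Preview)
Your approach is essentially the same as the paper's: show that any reflection fixing a vector in $U(F,C)$ must lie in $W_F$, then use the $W_F$-equivariance of $\tilde r_F$ to derive a contradiction with $\tilde r_F(v) \in C_F^T$. The paper phrases this for an arbitrary $w\in W$ fixing $(x,v)$ and invokes \Cref{lemma:omega equivariant} to conclude $w\in W_F$, whereas you argue directly that any wall through a point of $\omega(F)$ must contain $F$; both routes are fine, and yours is a perfectly good substitute for the lemma.

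There is one wobble in your step 2: you conflate $\tilde r_F$ with the differential $dr_F$. The map $\tilde r_F$ is a chosen $W_F$-equivariant bundle isomorphism $T\omega(F)\to r_F^*(TX|_F)\cong \omega(F)\times\R^n$, not the derivative of the retraction; indeed $dr_F|_x$ has image in $T_{r_F(x)}F$ and hence rank at most $\dim F$, so your claim that it carries $T_xX_r$ \emph{isomorphically} onto $T_{r_F(x)}X_r$ fails whenever $\dim F < l-1$. The fix is immediate and is exactly what the paper does: use that $\tilde r_F$ is $W_F$-equivariant by construction, so if $r\in W_F$ fixes $v$ it also fixes $\tilde r_F(v)$, forcing $\tilde r_F(v)$ onto the hyperplane $H_r$, which is disjoint from the open chamber $C_F^T$. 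With that correction your argument is complete.
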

\begin{proof}
Let $(x,v) \in U(F,C)$.
Let $G \in \FA$ be such that $x \in G$.
Since $x \in \omega(F)$, $F \le G$. 
Suppose for some $w \in W$ that is not identity, $w(x,v) = (x,v)$.
Then $wx = x$ and hence, by \Cref{lemma:omega equivariant}, $w \in W_F$.
But then $r_F w = w r_F$ and hence $\tilde r_F w = w \tilde r_F$.
But this means $\tilde r_F(v)$ is fixed by $w \in W_F$, which contradicts $\tilde r_F(v) \in C^T_F$.
\end{proof}

\begin{lemma}\label{lemma:U determines F C}
Let $(F,C), (G,D) \in \Sal_0(\A)$.
If $U(F,C) = U(G,D)$, then $(F,C) = (G,D)$.
\end{lemma}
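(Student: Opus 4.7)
The plan is to peel off information from $U(F,C)$ in two stages: first recover $F$ from the image under the footpoint projection to $X$, then recover $C$ from the fiberwise data.

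First I would observe that the bundle projection $TX \to X$ sends $U(F,C)$ onto $\omega(F)$. Indeed, $U(F,C) = \tilde r_F^{-1}(F \times C^T_F)$ sits inside $T\omega(F)$, and since $\tilde r_F$ is a fiberwise isomorphism covering $r_F \from \omega(F) \to F$ and $C^T_F$ is non-empty, every point of $\omega(F)$ is attained. Consequently $U(F,C) = U(G,D)$ implies $\omega(F) = \omega(G)$.

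Next I would deduce $F = G$ by a dimension count. From the definitions of $\Theta(\gamma)$ and $\omega(H)$, any $\Theta(\gamma)$ for $\gamma = (F_0 < \dots < F_p) \in \Chain(H)$ is contained in $F_p$, so $\omega(H)$ lies inside the union of strata of dimension at least $\dim H$. Combining this with \Cref{lemma:skeleton of omega} applied to the trivial chain $(H)$ yields $\omega(H) \cap X^{\dim H} = H$. Assume without loss of generality that $\dim F \le \dim G$. Then $F = \omega(F) \cap X^{\dim F} = \omega(G) \cap X^{\dim F}$, and the right-hand side is empty whenever $\dim F < \dim G$, so we must have $\dim F = \dim G$, and then $F = \omega(G) \cap X^{\dim G} = G$.

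Finally, granted $F = G$, the equality $U(F,C) = U(F,D)$ together with the fiberwise-isomorphism property of $\tilde r_F$ forces $C^T_F = D^T_F$ as open chambers of $\R^n$. But $C^T_F$ is by construction the inner sector at a point of $F$ cut out by the local arrangement $\A_F$, and so it determines the local chamber $C_F$; moreover, by the description of the arrangement in a neighbourhood of $F$, the global chamber $C$ with $F \le C$ is the unique chamber of $\A$ whose closure contains $F$ and which lies inside $C_F$. Hence $C = D$. The only mildly delicate point is the dimension argument in the second step; the remainder is routine bookkeeping against the lemmas already established.
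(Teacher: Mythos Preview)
Your proof is correct and follows the same two-stage strategy as the paper: first recover $F$ as the minimal-dimensional face meeting $\omega(F)$ (the image of $U(F,C)$ under the footpoint projection), then recover $C$ from the fiberwise chamber $C^T_F$. The only cosmetic difference is that the paper invokes \Cref{lemma:tangent space inclusion} to obtain $C_F = D_F$ and then argues by contradiction, whereas you read $C^T_F = D^T_F$ directly from the definition $U(F,C) = \tilde r_F^{-1}(F \times C^T_F)$ and phrase the final step as a uniqueness statement; the content is the same.
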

\begin{proof}
Since $U(F,C)$ is a bundle over $\omega(F)$, it determines $\omega(F)$.
By construction, $F$ is the unique minimal face contained in $\omega(F)$.
So $F = G$.

So now by \Cref{lemma:tangent space inclusion} we have that $C_F = D_F$.
Suppose for the sake of contradiction that $C \ne D$.
But then in some neighborhood of a point $x \in F$, $C$ and $D$ must be separated by some wall corresponding to a reflection $r \in W_F$.
So $C$ and $D$ are in different components of $X \setminus X_r$.
Since $X_r$ occurs as a wall in $\A_F$, $C_F$ cannot then be equal to $D_F$.
\end{proof}

\begin{lemma}\label{lemma:Us cover}
We have
\[M(\A) = \bigcup_{(F,C) \in \Sal_0(\A)} U(F,C) \dispunct{.}\]
\end{lemma}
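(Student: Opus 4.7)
The forward inclusion $\bigcup_{(F,C) \in \Sal_0(\A)} U(F,C) \subseteq M(\A)$ is immediate from \Cref{lemma:U in complement}, so the only thing to prove is that every $(x,v) \in M(\A)$ lies in some $U(F,C)$. The plan is to first locate a canonical $F$ from $x$, and then to locate $C$ from $v$ using the linear $W_F$-structure on the fiber over $F$.

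For the face: since the stratification of $X$ induced by $\A$ is a regular cell complex by \Cref{def1sec3}, there is a unique open face $F$ containing $x$. The length-one chain $(F)$ satisfies $\Theta((F)) = F$, so $x \in F \subseteq \omega(F)$, and $r_F$ fixes points of $F$; hence $\tilde r_F$ restricts on $T_xX$ to the fiber identification $T_xX \cong \{x\} \times \R^n$ coming from the $W_F$-equivariant local trivialization $TX|_{\ol F} \cong \ol F \times \R^n$. Write $v' \in \R^n$ for the image of $v$ under this identification.

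For the chamber: every wall $X_r$ with $x \in X_r$ must contain the whole stratum $F$ (walls are unions of cells of the regular structure), so $X_r \in \A_F$; and by the local smoothness assumption these walls correspond fiberwise to the reflecting hyperplanes of the linear $W_F$-action on $\R^n$. Since $(x,v) \notin TX_r$ for every such $r$, the vector $v'$ avoids every reflecting hyperplane of $W_F$, hence lies in a unique chamber $\Sigma$ of the linear $W_F$-arrangement in $\R^n$. By the inner-sector correspondence recorded in the paragraph preceding \Cref{lemma:tangent space inclusion}, $\Sigma = C^T_F$ for some chamber $C$ of $\A$ with $F \le C$; then $(F,C) \in \Sal_0(\A)$ and $\tilde r_F(x,v) \in F \times C^T_F$, which gives $(x,v) \in U(F,C)$. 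The only step requiring real care is verifying that every chamber of the linear $W_F$-arrangement in $\R^n$ arises as an inner sector $C^T_F$ for some $C \ge F$; this surjectivity is precisely the content of the local smoothness assumption invoked at the start of the subsection, which identifies $\A_F$ in a neighborhood of $F$ with the linear $W_F$-arrangement in the fiber.
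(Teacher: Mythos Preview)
Your proof is correct and follows essentially the same approach as the paper's: pick the face $F$ with $x\in F$, observe that $x\in\omega(F)$, and then use that $(x,v)\in M(\A)$ forces $\tilde r_F(v)$ to lie in some chamber $C^T_F$ of the linear $W_F$-arrangement. You are simply more explicit about two points the paper compresses---the inclusion $\bigcup U(F,C)\subseteq M(\A)$ via \Cref{lemma:U in complement}, and the surjectivity of the inner-sector correspondence---whereas the paper phrases the key step as ``$(x,v)$ is not fixed by any non-identity element of $W$, hence $\tilde r_F(v)$ is not fixed by any non-identity element of $W_F$.''
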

\begin{proof}
Let $(x,v) \in M(\A)$.
Let $F \in \FA$ be such that $x \in F$.
Since $(x,v)$ is not fixed by any non-identity element of $W$, $\tilde r_F(v)$ is not fixed by any non-identity element of $W_F$.
Hence, for some $C \in \Ch$, $F \le C$ and $\tilde r_F(v) \in C^T_F$.
Then $(x,v) \in U(F,C)$. 
\end{proof}

\begin{lemma}\label{lemma:Us intersect then preceq}
Let $(F,C), (G,D) \in \Sal_0(\A)$.
If $U(F,C) \cap U(G,D) \ne \emptyset$, then either $(F,C) \preceq (G,D)$ or $(G,D) \preceq (F,C)$.
\end{lemma}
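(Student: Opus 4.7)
The plan is to leverage the bundle structure of $U(F,C)$ over $\omega(F)$: a common point of $U(F,C) \cap U(G,D)$ projects to a common point of $\omega(F) \cap \omega(G)$, which by \Cref{lemma:omegas intersect then preceq} already forces $F$ and $G$ to be comparable in $\F(\A)$. The remaining content is to upgrade this to the chamber-level comparison $C_F \subseteq D_G$ (or the symmetric one) required by $\preceq$.

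Concretely, I pick $(x,v) \in U(F,C) \cap U(G,D)$, observe $x \in \omega(F) \cap \omega(G)$, and assume without loss of generality that $F \le G$. By the definition of the $U$'s, $v \in T_x^+ C_F \cap T_x^+ D_G$. If one can show $T_x^+ C_F \subseteq T_x^+ D_G$, then \Cref{lemma:tangent space inclusion} immediately gives $C_F \subseteq D_G$, and hence $(F,C) \preceq (G,D)$. To obtain the tangent-sector inclusion, the natural route mirrors the reduction performed inside the proof of \Cref{lemma:tangent space inclusion}: the retract $r_G$ sends $\omega(F) \cap \omega(G)$ into itself and satisfies $r_F = r_F \circ r_G$ there, so the sectors $T_x^+ C_F$ and $T_x^+ D_G$ are transported along $r_G$ without change, reducing the question to the case $x \in G \cap \omega(F)$. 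At such an $x$, $T_x^+ D_G$ coincides with the chamber $D^T_G$ of the linear arrangement $\A_G$ on $T_xX$, while $T_x^+ C_F$ is a chamber of the finer linear arrangement $\A_F$, finer since $F \le G$ forces any wall containing $G$ to contain $F$ and hence $\A_G \subseteq \A_F$. Two chambers, one of a finer and one of a coarser linear arrangement on the same vector space, which share an interior vector $v$ must satisfy the finer inside the coarser, giving $T_x^+ C_F \subseteq T_x^+ D_G$.

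The main obstacle is the bookkeeping at the last step: one must verify that the bundle-theoretic definition of $T_x^+ C_F$ via $\tilde r_F$ genuinely matches, at a point $x \in G \cap \omega(F)$, the inner sector of the chamber $C_F$ of $\A_F$ inside $T_xX$, and similarly for $T_x^+ D_G$. Once this identification is in place, the disjoint-or-equal property of chambers of a linear arrangement closes the argument, precisely paralleling the computation already carried out in \Cref{lemma:tangent space inclusion}.
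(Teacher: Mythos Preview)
Your proposal is correct and follows essentially the same route as the paper: project to $\omega(F)\cap\omega(G)$, use \Cref{lemma:omegas intersect then preceq} to get $F\le G$ (without loss of generality), and then invoke \Cref{lemma:tangent space inclusion} to conclude $C_F\subseteq D_G$. The paper's proof is terser, citing \Cref{lemma:tangent space inclusion} directly for the last step, while you spell out the ``intersecting tangent sectors implies inclusion'' argument that is already contained in the proof of that lemma.
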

\begin{proof}
We have, since $U(C,F)$ and $U(G,D)$ are bundles over $\omega(F)$ and $\omega(G)$ respectively, that $\omega(F) \cap \omega(G) \ne \emptyset$.
Then by \Cref{lemma:omegas intersect then preceq}, without loss of generality, let $F \le G$.
Now by \Cref{lemma:tangent space inclusion}, we get $C_F \subseteq D_G$.
\end{proof}

By induction, we get:
\begin{lemma}\label{lemma:Us intersect then chain}
Let $(F_0,C_0),\dots,(F_p,C_p) \in \Sal_0(\A)$.
If $U(F_0,C_0) \cap \dots \cap U(F_p,C_p) \ne \emptyset$, then up to a permutation of the indices we have $(F_0,C_0) \preceq \dots \preceq (F_p,C_p)$.
\end{lemma}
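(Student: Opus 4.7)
The plan is to reduce the claim to the previous \Cref{lemma:Us intersect then preceq} by a straightforward combinatorial argument, using that $\preceq$ (as defined on $\Sal_0(\A)$) is a partial order.

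First, I would observe that from any point $(x,v) \in U(F_0,C_0) \cap \dots \cap U(F_p,C_p)$, one concludes that for every pair $0 \le i < j \le p$, the intersection $U(F_i,C_i) \cap U(F_j,C_j)$ is non-empty. Applying \Cref{lemma:Us intersect then preceq} pairwise, each pair $(F_i,C_i)$ and $(F_j,C_j)$ is comparable under $\preceq$, i.e., either $(F_i,C_i) \preceq (F_j,C_j)$ or $(F_j,C_j) \preceq (F_i,C_i)$.

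Second, I would invoke the elementary fact that in any partial order a finite set of pairwise comparable elements is totally ordered; thus there exists a permutation $\sigma$ of $\{0,1,\dots,p\}$ such that $(F_{\sigma(0)},C_{\sigma(0)}) \preceq (F_{\sigma(1)},C_{\sigma(1)}) \preceq \dots \preceq (F_{\sigma(p)},C_{\sigma(p)})$. This requires transitivity and antisymmetry of $\preceq$, both of which are immediate: transitivity follows from transitivity of $\le$ on $\FA$ and of $\subseteq$ on the face chambers $C_F$ inside the local arrangement $\A_F$; antisymmetry follows since if $(F,C) \preceq (G,D)$ and $(G,D) \preceq (F,C)$, then $F = G$ and $C_F = D_G = D_F$, and \Cref{lemma:U determines F C}'s reasoning (or simply that $C_F = D_F$ inside $\A_F$ pins down the chamber $C$) forces $C = D$.

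The proof could alternatively be phrased as an induction on $p$, where the inductive step picks out a $\preceq$-maximal element from the set $\{(F_i,C_i)\}$ (which exists since the set is finite and pairwise comparable) and applies the induction hypothesis to the remaining indices, whose $U$'s still share the common point $(x,v)$. I do not expect any genuine obstacle here: the content is fully loaded into \Cref{lemma:Us intersect then preceq}, and this statement simply promotes a pairwise statement to an ordered-chain statement via the poset structure on $\Sal_0(\A)$.
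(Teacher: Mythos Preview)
Your proposal is correct and matches the paper's approach: the paper simply states ``By induction, we get'' before this lemma, indicating exactly the argument you describe (pairwise comparability from \Cref{lemma:Us intersect then preceq}, then assembling into a chain via the poset structure or an induction on $p$). Your write-up is more detailed than the paper's one-line justification, but the content is identical.
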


\begin{lemma}\label{lemma:Us chain then contractible}
Let $(F_0,C_0),\dots,(F_p,C_p)$ be a chain in $\Sal_0(\A)$.
Then $U(F_0,C_0)\cap \dots \cap U(F_p,C_p)$ is non-empty and contractible.
\end{lemma}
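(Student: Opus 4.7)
The plan is to exhibit the intersection as a trivial bundle whose base and fiber are each contractible. Set $\gamma = (F_0,F_1,\dots,F_p)$; the hypothesis on the chain in $\Sal_0(\A)$ gives both $F_0 \le F_1 \le \dots \le F_p$ and $(C_i)_{F_i} \subseteq (C_j)_{F_j}$ for all $i \le j$, and I will exploit this monotonicity separately on the base and on the fiber.

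First I would handle the base. Since each $U(F_i,C_i)$ lives inside $T\omega(F_i)$, the intersection is contained in $\bigcap_i T\omega(F_i)$. Using that $\omega(\gamma)$ is open (as a finite intersection of the open sets $\omega(F_i)$, see \Cref{lemma:omega is open}) and that $\bigcap_i \omega(F_i) = \omega(\gamma)$ by \Cref{lemma:intersect of omegas is omega}, the ambient space can be reduced to $T\omega(\gamma)$. By \Cref{lemma:omega contractible}, $\omega(\gamma)$ is non-empty and contractible, so the base of the eventual bundle is already under control.

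Next I would carry out the fiberwise analysis. For any $x \in \omega(\gamma)$, the fiber of $U(F_i,C_i)$ in $T_xX$ is $T_x^+(C_i)_{F_i}$. Applying \Cref{lemma:tangent space inclusion} to each consecutive pair $F_i \le F_{i+1}$ at the point $x \in \omega(F_i) \cap \omega(F_{i+1})$, together with the chain inclusion $(C_i)_{F_i} \subseteq (C_{i+1})_{F_{i+1}}$, yields $T_x^+(C_i)_{F_i} \subseteq T_x^+(C_{i+1})_{F_{i+1}}$. Chaining these together, the intersection of the fibers collapses to the smallest one, $T_x^+(C_0)_{F_0}$, and therefore
\[\bigcap\nolimits_{i=0}^p U(F_i, C_i) = U(F_0, C_0) \cap T\omega(\gamma) \dispunct{.}\]

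Finally I would pass to a product via the trivialization $\tilde r_{F_0} \from T\omega(F_0) \to F_0 \times \R^n$. Pulling back along $r_{F_0}$, one gets $T\omega(F_0) \cong \omega(F_0) \times \R^n$, under which the bundle map $\tilde r_{F_0}$ becomes $(x,v) \mapsto (r_{F_0}(x), v)$, and hence $U(F_0, C_0) = \tilde r_{F_0}^{-1}(F_0 \times (C_0)_{F_0}^T)$ is identified with $\omega(F_0) \times (C_0)_{F_0}^T$. Restricting the base to the open subset $\omega(\gamma)$, the intersection is then identified with $\omega(\gamma) \times (C_0)_{F_0}^T$. Since $(C_0)_{F_0}^T$ is an open convex cone in $\R^n$ it is contractible, and the product of two contractible spaces is non-empty and contractible. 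The step I expect to require most care is the fiberwise reduction via \Cref{lemma:tangent space inclusion}: one must verify the monotonicity of the inner sectors at \emph{every} point of $\omega(\gamma)$, not just at points of $F_0$, and this is where the interplay between the chain condition and the convex structure on faces is genuinely used.
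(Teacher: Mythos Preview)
Your proposal is correct and follows essentially the same approach as the paper: both arguments identify the intersection as a bundle over $\omega(\gamma)$ with fiber $(C_0)_{F_0}^T$, invoking \Cref{lemma:tangent space inclusion} for the fiberwise collapse and \Cref{lemma:omega contractible} for contractibility of the base. Your version is simply more explicit, spelling out the use of \Cref{lemma:intersect of omegas is omega} and the trivialization via $\tilde r_{F_0}$ that the paper's two-sentence proof leaves to the reader.
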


\begin{proof}
By \Cref{lemma:tangent space inclusion} the intersection is a bundle over $\omega(F_0,\dots,F_p)$ with fiber $(C_0)_{F_0}$, which is contractible.
By \Cref{lemma:omega contractible}, the base space is contractible.
\end{proof}

\begin{lemma}\label{lemma:W free on Us}
Let $(F,C) \in \Sal_0(\A)$ and $w \in W \setminus \{1\}$. Then $wU(F,C) \cap U(F,C) = \emptyset$.
\end{lemma}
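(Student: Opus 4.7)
The plan is to reduce the statement to a free action of the finite reflection group $W_F$ on the chambers of the local tangent arrangement, which is a standard consequence of Coxeter theory.

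First I would observe that by the equivariance $wU(F,C) = U(wF, wC)$ (which holds by construction). Since $U(F,C)$ is a bundle over $\omega(F)$ and $U(wF,wC)$ is a bundle over $\omega(wF) = w\omega(F)$, a nonempty intersection $wU(F,C) \cap U(F,C) \ne \emptyset$ would force $w\omega(F) \cap \omega(F) \ne \emptyset$. \Cref{lemma:omega equivariant} then gives $wF = F$, so $w \in W_F$. Consequently $wU(F,C) = U(F,wC)$ and both bundles sit over the same base $\omega(F)$.

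Next I would apply \Cref{lemma:Us intersect then preceq} to $(F,C)$ and $(F,wC)$: since their $U$'s meet, we may assume $(F,C) \preceq (F,wC)$, which by definition means $C_F \subseteq (wC)_F$. But $C_F$ and $(wC)_F$ are both chambers of the local arrangement $\A_F$, so this inclusion forces equality: $C_F = (wC)_F$. Since the projection $\pi_F$ is $W_F$-equivariant, $(wC)_F = w \cdot C_F$, so $w \cdot C_F = C_F$.

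Finally I would use the local linearization at a point $x \in F$: via the exponential chart from the slice theorem, $\A_F$ corresponds to the linear reflection arrangement of the finite Coxeter group $W_F$ acting on $T_xX$, and the chamber $C_F$ corresponds to the tangent chamber $C_F^T$. By the classical fact that a finite linear reflection group acts simply transitively on the chambers of its reflection arrangement, $w \cdot C_F = C_F$ implies $w = 1$, contradicting $w \ne 1$.

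The only real obstacle is being careful about the identification of $\A_F$ with a linear hyperplane arrangement in $T_xX$ on which $W_F$ acts as a reflection group; this is guaranteed by local smoothness (the slice is $W_F$-equivariantly diffeomorphic to a neighbourhood of the origin in a linear $W_F$-representation, as recalled in \Cref{sec:manrefl}) and the definition of the inner sector $C_F^T$ used in the construction of $U(F,C)$. Once that identification is in place, the argument is a short chain of reductions and does not require any new geometry beyond the preceding lemmas.
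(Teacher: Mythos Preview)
Your proof is correct and follows essentially the same two-case split as the paper: reduce to $w\in W_F$ via \Cref{lemma:omega equivariant}, then use that $W_F$ acts freely on the chambers of the local (tangent) arrangement. The only cosmetic difference is that the paper handles the $w\in W_F$ case by appealing directly to \Cref{lemma:tangent space inclusion} (disjoint tangent sectors) rather than routing through \Cref{lemma:Us intersect then preceq}, and it leaves the simple-transitivity of $W_F$ on chambers implicit where you spell it out.
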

\begin{proof}
If $w \notin W_F$, we are done by \Cref{lemma:omega equivariant}.
If $w \in W_F$, we are done by \Cref{lemma:tangent space inclusion}.
\end{proof}

\begin{proof}[Proof of \Cref{main theorem}]
We have constructed a family $\set{U(F,C)}{(F,C) \in \Sal_0(\A)}$ of open subsets of $M(\A)$, with the following properties:
\begin{enumerate}
\item The assignment of $U(F,C)$ to $(F,C)$ is one-to-one. This is \Cref{lemma:U determines F C}.
\item The sets $U(F,C)$ cover $M(\A)$. This is \Cref{lemma:Us cover}.
\item For $w \in W$, $wU(F,C) = U(wF,wC)$. This is by construction.
\item For $(F_0,C_0), (F_1,C_1), \dots, (F_p,C_p) \in \Sal_0(\A)$, the intersection
\[U(F_0,C_0) \cap U(F_1, C_1) \cap \cdots \cap U(F_p,C_p)\]
is non-empty if and only if, up to permutation, we have a chain
\[(F_0,C_0) \preceq (F_1,C_1) \preceq \dots \preceq (F_p,C_p)\dispunct{.}\]
For such a chain, the intersection of $U(F_i,C_i)$ is contractible.
This is by \Cref{lemma:Us intersect then chain,lemma:Us chain then contractible}.
\item If $w \in W\setminus\{1\}$, then $wU(F,C) \cap U(F,C) = \emptyset$. This is \Cref{lemma:W free on Us}.
\end{enumerate}

Thus, the nerve of the cover $\{U(F,C)\}$ of $M(\A)$ corresponds equivariantly to the order complex of $(\Sal_0(\A),\preceq)$.
By the equivariant nerve lemma, we are done.
\end{proof}

\section{The fundamental groups}\label{sec:group}
In this section we show that the presentation for the fundamental group of the orbit space, in most cases, is similar to that of Artin groups.
The results stated in this section are similar to the ones given in \cite[Section 3]{parisarxiv}.
As the proofs are absolutely identical we just mention the references here.

\subsection{Coxeter equipment and the Salvetti complex} \label{subsec:group salvetti}
Let $X$ be a smooth $l$-manifold and $\A$ be a reflection arrangement corresponding to a Coxeter transformation group $(W, S)$.
As before, denote the fundamental chamber by $C$, which we would like to be a simple convex polytope, and its face poset by $\F(C)$.

Consider the surjective map $\sigma$ from the mirrors of $\ol{C}$ onto the set of generators $S$ of the Coxeter system $(W, S)$ given by sending $C_s\mapsto s$.
Let $F$ be a $k$-face of $C$, since it is the intersection of $k$ codimension-$1$ faces, say $\{C_{s_1},\dots, C_{s_k} \}$, we can extend the map $\sigma$ from $\F(C)$ to the power set of $S$ as follows:
\[F = C_{s_1}\cap\cdots\cap C_{s_k}\mapsto \{ s_1, \dots, s_k\}.\]
If for each face $F$ the subgroup generated by $\sigma(F)$ is finite then the we call $\sigma$ a \emph{Coxeter equipment} of $C$ by $(W, S)$ (see \cite[Section 4.2]{michoretal07}).
The reader can verify that a Coxeter equipment is injective and an order reversing poset map.

\bd{def1sec4}
A subset $T$ of $S$ is said to be an \emph{acceptable (spherical) subset} if the subgroup $W_T$ is finite and the intersection $\bigcap_{s\in T} C_s$ is a non-empty face of $C$. 
\ed 
We denote by $\s^f_X$ the set of all acceptable subsets.
The following lemmas follow from the universal construction of Vinberg (see \cite{vinberg71}). 

\bl{lem1sec4}
With the notation as before the given Coxeter equipment $\sigma\colon \F(C)\to \s^f_X$ is a bijection.
Moreover the following properties hold.
\begin{enumerate}
\item For $T, U\in \s^f_X$ we have $T\subseteq U \iff \sigma^{-1}(U)\leq \sigma^{-1}(T)$.
\item For $T\in \s^f_X$ the stabilizer of the face $F \vcentcolon= \sigma^{-1}(T)$ is equal to $W_T$, and every element of $W_T$ pointwise fixes $F$.
\end{enumerate}
\el
\bpr \cite[Theorem 2.10]{parisarxiv} \epr

We set $\P^f_X \vcentcolon= \{wW_T\mid w\in W \hbox{~and~} T\in \s^f_X \}$ and order it by inclusion. 
Since every face in $\F(\A)$ is of the form $wF$ for some $w\in W$ and $F\in \F(C)$ one can extend the Coxeter equipment to whole of $\F(\A)$ by sending $wF\mapsto wW_F$.

\bl{lem2sec4}
The above defined extension of the Coxeter equipment gives a bijection between $\P^f_X$ and $\F(\A)$. 
Moreover, the following properties hold.
\begin{enumerate}
\item Let $v, w\in W$ and $T, U\in \s^f_X$. We have $vW_T\subseteq wW_U$ if and only if $\sigma^{-1}(wW_U) \leq \sigma^{-1}(vW_T)$.
\item Let $v, w\in W$ and $T, U\in \s^f_X$. We have $vW_T\subseteq wW_U$ if and only if $T\subseteq U$ and $w^{-1}v\in W_U$.
\item The restriction of the extended $\sigma$ gives the bijection between the chambers and the elements of $W$.
\end{enumerate}
\el

Before stating the next lemma we recall the following definition.

\bd{def2sec4}
Let $(W, S)$ be a Coxeter system and $T$ a subset of $S$.
An element $w\in W$ is said to be \emph{$(\emptyset, T)$-minimal} if any one of the following equivalent condition is satisfied.
\begin{enumerate}
\item The element $w$ is of minimal length in $w W_T$;
\item $\ell(ws) > \ell(w)$ for all $s\in T$;
\item $\ell(wu) = \ell(w) + \ell(u)$ for all $u\in W_T$.
\end{enumerate}
\ed
The following lemma ties the geometry of the arrangement to the combinatorics of $W$.
\bl{lem3sec4}
An element $w\in W$ is $(\emptyset, T)$-minimal for some $T\subseteq S$ if and only if for every reflection $r\in W_T$ the corresponding wall $X_r\notin \mathcal{R}(C, w^{-1}C)$. 
\el

\bl{lem4sec4}
Let $\A$ be a manifold reflection arrangement in $X$ associated with a Coxeter system $(W, S)$ and the Coxeter equipment $\sigma$. 
Then the relation $\preceq$ defined as:
\[(v, T)\preceq(w, U)\iff T\subseteq U, w^{-1}v\in W_U, w^{-1}v \text{ is $(\emptyset, T)$-minimal}\]
is a partial order on $\s^f_X\times W$.
\el
\bpr
\cite[Lemma 3.2]{parisarxiv}.
\epr

Note that the action of $W$ on $\s^f_X\times W$ defined by $w\cdot (T, v) = (T, wv)$ is order preserving.
Let $C'$ be a chamber of the form $wC$ and $F'$ be one of its face. 
Define the map $\Phi\colon \Sal_0(\A)\to \s^f_X\times W$ as follows
\[\Phi(F', C') = \Phi(wF, wC) = (\sigma(F), w)\dispunct{.}\]

\bt{thm1sec4}
The map $\Phi$ is an order reversing poset isomorphism. 
\et 
\bpr
\cite[Theorem 3.3]{parisarxiv}.
\epr
Hence the map $\Phi$ induces a homeomorphism on the geometric realizations of the corresponding posets. 
Moreover, the reader can verify that the map is also $W$-equivariant.
Combining the above result with \Cref{main theorem} we get the following.
\bc{cor1sec4}
The geometric realization of $\s^f_X\times W$ is $W$-equivariantly homotopy equivalent to the associated tangent bundle complement $M_W$.
\ec

\bc{cor2sec4}
The homotopy type of $N_W$ depends on the combinatorial type of the fundamental chamber and the chosen Coxeter equipment.
\ec

\subsection{A cell structure}
Let $\A$ be a reflection arrangement of submanifolds in an $l$-manifold $X$ corresponding to a Coxeter transformation group $W$.
Let $\F(\A)$ denote the associated face poset. 
By $(X, \F(\A))$ we mean the regular cell structure of $X$ induced by the arrangement. 

We also have the dual cell structure as follows.
For every face $F$ fix a point $x(F)\in F$, say the \emph{barycenter} of $F$. 
Note that $\ol{F}$ is homeomorphic to an appropriate-dimensional disc $B_F$ and carries a regular cell structure given by faces $F' \le F$.	
For every $G < F$ the barycenter $x(G)$ corresponds to a point $y_G$ of $B_F$.
If $\gamma = (G_0 ,\dots, G_k)$ is a chain of faces (with $G_0 < \dots < G_k$) of $F$ then denote by $\gamma_B$ the simplex that is the convex hull of the vertices $y_{G_0}, \dots, y_{G_k}$. 
Let $\Delta(\gamma)$ be the image of $\gamma_B$ under the chosen homeomorphism. 
Let $F^*$ be the union of all those $\Delta(\gamma)$'s that arise from chains beginning in $F$ and call it the \emph{dual cell} of $F$.
The collection of all the dual cells defines a regular cell structure since the link of each vertex is a sphere. 
If $F$ is $k$-dimensional, then $F^*$ is $(l-k)$-dimensional.
The poset $\F^*(\A)$ formed by all the dual faces is the dual of the face poset and regular cell structure $(X, \F^*(\A))$ is called the \emph{dual cell structure}. 

For the sake of notational simplicity, we denote the dual, regular cell complex by $X^*(\A)$ ($X^*$ if the context is clear). 
The symbols $C, D$ will denote vertices of $X^*$ and the symbol $F^k$ will denote a $k$-cell dual to the codimension $k$-face of $\A$. 
Note that a $0$-cell $C$ is a vertex of a $k$-cell $F^k$ in $X^*$ if and only if the closure of the chamber corresponding to $C$ contains the $(l-k)$-face dual to $F^k$. 
The action of the faces on chambers that was introduced in \Cref{def3sec3} also holds true for the dual cells. 
The symbol $F^k\circ C$ will denote the vertex of $F^k$ which is dual to the unique chamber `closest' to the chamber corresponding to $C$.  
The partial order on the cells of $X^*$ will be denoted by $\sqsubseteq$.
Now given $X^*(\A)$ construct a CW complex $S(\A)$ of dimension $l$ as follows:

The $0$-cells of $S(\A)$ correspond to $0$-cells of $X^*$, which we denote by the pairs $\left<C; C\right>$. 
For each $1$-cell $F^1 \in X^*$ with vertices $C_1, C_2$, take two homeomorphic copies of $F^1$ denoted by $\left<F^1; C_1\right>, \left<F^1; C_2\right>$. 
Attach these two $1$-cells in $S(\A)_0$ so that 
\[\partial \left<F^1; C_i\right> = \{\left<C_1; C_1\right>, \left<C_2; C_2\right> \}.\]
Orient the $1$-cell $\left<F^1; C_i\right>$ so that it begins at $\left<C_i; C_i\right>$, to obtain an oriented $1$-skeleton $S(\A)_1$.

By induction assume that we have constructed the $(k-1)$-skeleton of $S(\A)$, $1\leq k-1 < l$. 
To each $k$-cell $F^k \in X^*$ and to each of its vertex $C$ assign a $k$-cell $\left<F^k; C\right>$ that is isomorphic to $F^k$.
Let $\phi(F^k, C)\colon \left<F^k; C\right>\to S(\A)_{k-1}$ be the same characteristic map that identifies a $(k-1)$-cell $F^{k-1}\subset \partial F^k$ with the $k$-cell $\left<F^{k-1}; F^{k-1}\circ C\right>\subset \partial \left<F^k; C\right>$.
Extend the map $\phi(F^k, C)$ to whole of $\left<F^k; C\right>$ and use it as the attaching map, hence obtaining the $k$-skeleton. The boundary of every $k$-cell is given by 
\begin{equation}
\partial \left<F^k; C\right> = \bigcup_{F \sqsubseteq F^k} \left<F; F\circ C\right>. \label{eq1s3c3}
\end{equation}

The following lemma is now clear
\bl{lem5sec4}
The cell complex $S(\A)$ constructed above has the same homotopy type as that of the associated tangent bundle complement $M_W$.
\el
\bpr
This follows from the fact that the face poset of $S(\A)$ is isomorphic to the poset $(\Sal_0(\A),\preceq)$. 
Moreover, in view of \Cref{thm1sec4} it is also isomorphic to $\s^f_X\times W$.
\epr

\bt{thm2sec4} Let $\A$ be a manifold reflection arrangement in an $l$-manifold $X$ corresponding to $W$ and let $S(\A)$ denote the associated Salvetti cell complex. 
\begin{enumerate}
	\item There is a natural cellular map $\psi\colon S(\A)\to \F^*(\A)$ given by $\mleft< F, C\mright>\mapsto F$. The restriction of $\psi$ to the $0$-skeleton is a bijection and in general 
	\[ \psi^{-1}(F) = \{C\in \Ch \mid F \leq C \}.\] 
	\item  For every chamber $C$ there is a cellular map $\iota_C\colon \F^*(\A)\to S(\A)$ taking $F$ to $\mleft<F, F\circ C\mright>$ which is an embedding of $\F^*(\A)$ into $Sal(\A)$, and \[S(\A) = \bigcup_{C\in \Ch} \iota_C(\F^*).\] 
	\item If $W$ is finite then the absolute value of the Euler characteristic of $M(\A)$ is $\lvert W\rvert$.
    \item Let $T\A$ denote the union of the tangent bundles of the submanifolds in $\A$. Then $\widetilde{H}_i(TX, T\A) = 0$ unless $i = l$.
\end{enumerate} \et

\begin{proof}
(1) and (2) follow from the simple observation that $\psi\circ\iota_C$ is identity on $\F^*(\A)$. 
This also implies that $X$ is homeomorphic to a retract of $M(\A)$.\par  
We prove (3) by explicitly counting cells in the Salvetti complex. 
The Euler characteristic of a cell complex $K$ is equal to the alternating sum of number of cells of each dimension. 
Given a $k$-dimensional dual cell $F$ there are as many as $|\{C\in\Ch \mid F\leq C \}|$ $k$-cells in $\psi^{-1}(F)$. 
Hence for a vertex $\mleft<C, C\mright>\in S(\A)$ the number of $k$-cells that have $\mleft<C, C\mright>$ as a vertex is equal to the number of $k$-faces contained $C$. 
The alternating sum of numbers of such cells is $1 - \chi(\Lk(C))$, where $\Lk(C)$ is the link of $C$ in $\F^*(\A)$. 
Applying this we get 
\[\chi(S(\A)) = \sum_{C\in \Ch}(1 - \chi(\Lk(C))). \] 
Since each chamber is bounded, $\Lk(C)\simeq S^{l-1}$. Hence we have, 
\begin{align*}
		\chi(S(\A)) &= \sum_{C\in \Ch}(1 - \chi(\Lk(C)))  \\
		              &= \sum_{C \in \Ch}(1 - [1 + (-1)^{l-1}]) \\
					  &= (-1)^l \sum_{C \in \Ch}1.
\end{align*}
Hence, \[ \chi(M(\A)) = (-1)^l \hbox{(number of chambers)} \dispunct{.} \]
Note that there is a one-to-one correspondence between elements of $W$ and chambers in $\Ch$.

Let $\bigcup\A$ denote the union of submanifolds in $\A$. 
Since $\A$ induces a regular cell decomposition it has the homotopy type of wedge of $(l-1)$-dimensional spheres indexed by chambers. 
Claim (4) now follows from the homeomorphism of pairs $(TX, T\A) \cong (X, \bigcup\A )$.
\end{proof}

In the rest of this section we identify the cells of $S(\A)$ with the elements of $\s^f_X\times W$. 
For example, a $k$-cell is represented by $\langle T, w\rangle$ where $|T| = k$ and $w\in W$.
The $W$-action on $S(\A)$ defined by $w'\cdot \langle T, w\rangle = \langle T, w'w\rangle$ is cellular. 
The orbit complex denoted by $\ol{S}(\A)$ has the homotopy type of $N_W$. 
The cellular decomposition of $S(\A)$ determines a cellular decomposition of $\ol{S}(\A)$ which we now describe.
The orbit of the $k$-cell $\langle T, 1\rangle$ is $\{\langle T, u\rangle\mid |T| = k, w\in W \}$.
Hence $k$-cells of the orbit complex are determined by the $k$-subsets of $\s^f_X$; we denote these cells by $[T]$.
Note that the cell $[T]$ is homeomorphic to $[T, 1]$ (which in turn is homeomorphic to the cell dual to the face fixed by $W_T$) via a homeomorphism $h_T$.
The attaching map $\ol{\phi}_T\colon \partial [T]\to \ol{S}(\A)_{k-1}$ is defined as follows
\[ \ol{\phi}_T = \pi\circ \phi(T, 1)\circ h_T,\]
where $\pi\colon S(\A)\to \ol{S}(\A)$ is the natural projection.
Note that $\ol{\phi}_T$ is not in general a homeomorphism and the closed cell $[T]$ need not be embedded in $\ol{S}(\A)$.

In order to identify a presentation for $\pi_1(N_W)$ we describe the $2$-skeleton of these complexes.

\noindent \textbf{$0$-skeleton}: As described above the vertex set of $S(\A)$ is $\{\langle\emptyset, w \rangle \mid w\in W\}$.
For notational simplicity we denote a vertex of $S(\A)$ by $x(w)$. 
There is only vertex $x_0$ in the orbit complex.

\noindent \textbf{$1$-skeleton}: Edges in $S(\A)$ are determined by the pairs consisting of elements of $S$ and elements of $W$.
We denote such an edge by the symbol $e(s, w)$; its vertices are $x(w)$ and $x(ws)$.
We orient the edges such that their initial vertex is $x(w)$.
The construction implies that there is another edge labeled $e(s, ws)$ which starts at $x(ws)$ and ends at $x(w)$.
Whereas in the orbit complex there is a unique edge $\ol{e}_s$ with both the end points being $x_0$.
Note that the $W$-action preserves the orientation on $S(\A)_1$ hence it induces an orientation on $\ol{e}_s$'s.

Before explaining the $2$-skeleton we adopt a notational convenience. 
For $m>0$ denote the product $\underbrace{sts\cdots}_m$ by the symbol $\ts\prod(s,t:m)$.

\noindent \textbf{$2$-skeleton}: Let $s, t\in S$ such that $T := \{ s, t\}\in \s^f_X$ and let $m := m_{st}$ denote the order of $st$. For every $w\in W$ there is a $2$-cell $\langle T, w \rangle$ in $S(\A)$ whose boundary is
\[e(s, w) e(t, ws) e(s, wst)\cdots e(t, w\ts\prod(s, t:m-1)) e(s, w\ts\prod(t, s:m-1))^{-1}\cdots e(s, wt)^{-1} e(t, w)^{-1} \]
if $m$ is even, and
\[e(s, w) e(t, ws) e(s, wst)\cdots e(s, w\ts\prod(s, t:m-1)) e(t, w\ts\prod(t, s:m-1))^{-1}\cdots e(s, wt,)^{-1} e(t, w)^{-1} \]
if $m$ is odd. The $W$-orbit of $2$-cells $\{\langle T, w\rangle\mid w\in W \}$ determines the $2$-cell $[T]$ of $\ol{S}(\A)$ whose boundary curve is given by 
\[\ts\prod(\ol{e}_s, \ol{e}_t: m)\ts\prod(\ol{e}_t, \ol{e}_s: m)^{-1}. \]
Note that this expression is independent of the parity of $m$ since there is only vertex $x_0$.

From the above discussion following conclusion can be drawn.

\bt{thm3sec4}
Let $\A_W$ be a manifold reflection arrangement in $X$ corresponding to the Coxeter transformation group $W$. The fundamental group of the orbit space $N_W$ has the following presentation 
\[\pi_1(N_W) = \langle \ol{e}_s \mid s\in S, \ts\prod(\ol{e}_s, \ol{e}_t: m_{st}) = \ts\prod(\ol{e}_s, \ol{e}_t: m_{st}) \hbox{~for every~} \{s, t\}\in \s^f_X \rangle. \]
\et

\bt{thm4sec4}
If $\A_W$ is manifold reflection arrangement in a simply connected manifold $X$ then $\pi_1(N_W)$ is the Artin group associated with $W$.
\et 

\bpr
According to \cite[Theorem 9.1.3]{davisbook08} if the manifold $X$ is simply connected then for every $2$-element spherical subset $\{ s, t\}$ the corresponding intersection of mirrors $C_s\cap C_t$ is non-empty.
Which means that the subset is acceptable and there is a $2$-cell in $\ol{\Sal}(\A_W)$ corresponding to this subset.
\epr

If $W$ is a finite reflection group (of rank at least $3$) acting on a simply connected sphere then $\pi_1(N_W)$ is the Artin group of finite type. 
However note that the Salvetti complex constructed above is not a model for the $K(\pi, 1)$-space.
The reader can verify, using \Cref{thm2sec4}, that it has non-trivial higher homotopy groups. 
Finally, we have some examples.

\be{ex1sec4} 
Recall the action of the dihedral group on $S^1$ from \Cref{ex1}. 
This action gives an arrangement of $2m$ points on $S^1$.
The tangent bundle complement of this arrangement is an infinite cylinder with $2m$ punctures.
Hence it has the same homotopy type as that of wedge of $2m + 1$ circles. 
As for the quotient complex since there are only two non-empty acceptable subsets each of cardinality $1$ it has the homotopy type of wedge of two circles.
\Cref{s3ons1sal} illustrates the situation for $S_3$ action.
\begin{figure}[!ht]
  \begin{center}  
    \includegraphics[scale=0.5,clip]{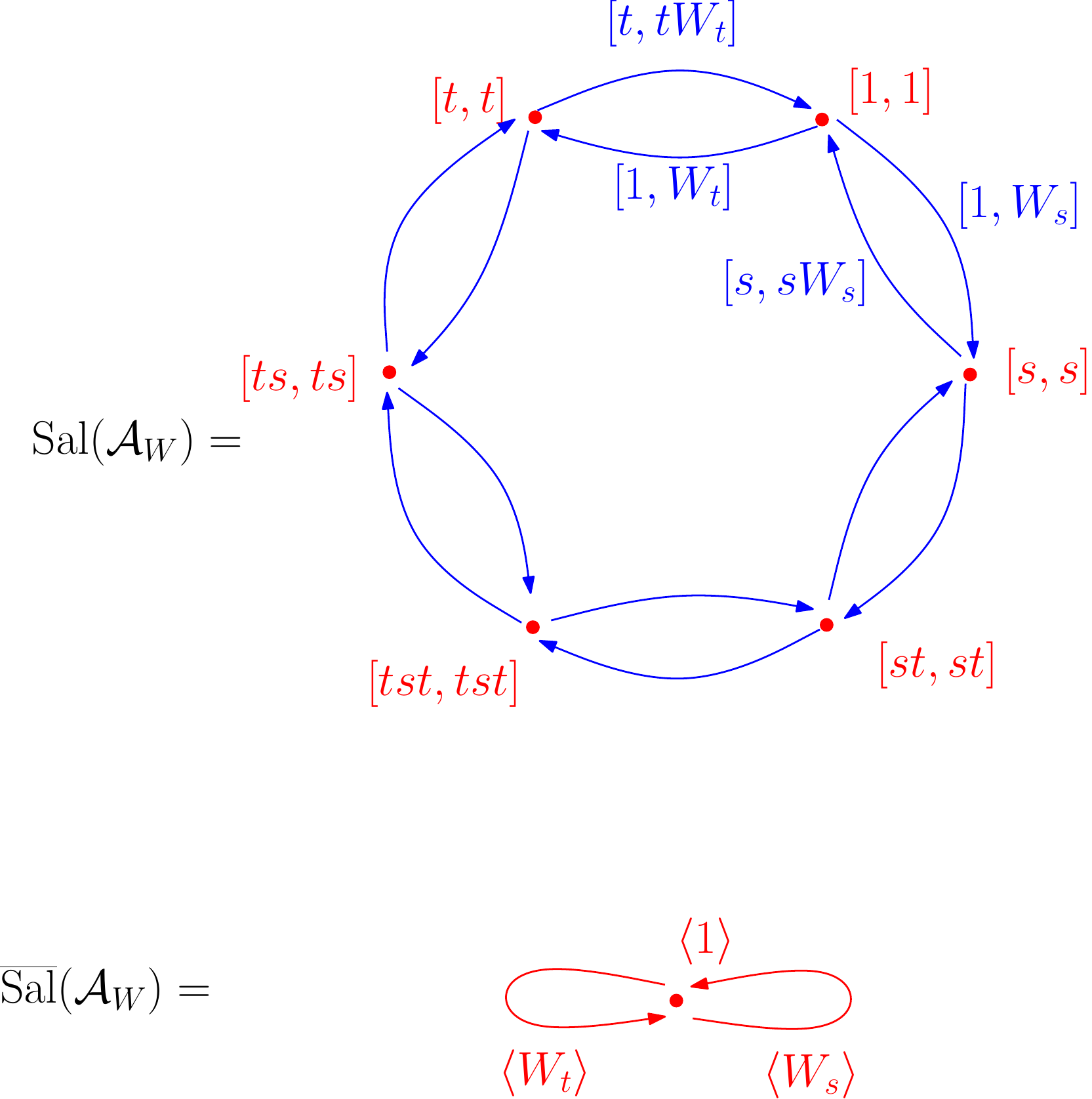} 
  \end{center}
  \caption{The Salvetti complex.}     \label{s3ons1sal}     
\end{figure}
\ee

\be{ex2sec4}
Let $W_a$ be an irreducible, affine Weyl group (such groups are classified by the extended Coxeter-Dynkin diagrams). 
Then $W_a$ is the semidirect product of subgroup of translations (i.e., the rank $l$ coroot lattice) and a finite reflection group $W$ (see \cite[Chapter 4]{humph90}).
As a Coxeter group $W_a$ is generated by $l+1$ reflections $\{s_0, s_1\dots, s_l \}$.
Reflections from $s_1$ to $s_l$ generate $W$ and $s_0$ is the reflection corresponding to the translate of the longest root. 
The $W$-action on $\R^l$ fixes a finite family of parallel hyperplanes. 
The fundamental domain of this action is an alcove (an $l$-simplex to be precise). 
If $T$ is any subset of the generators of cardinality at most $l$ then the corresponding parabolic subgroup is finite.
Hence any subset of cardinality at most $l$ is spherical.
In view of \cite[Theorem 9.1.4]{davisbook08}, in this case, every spherical subset is acceptable.
As a result the Salvetti complex construction described in this section coincides with the one given by Charney and Davis (see \Cref{intro}).
We conclude that the tangent bundle complement associated with affine Weyl groups has the same homotopy type as the Vinberg complement, i.e., the space defined in \Cref{vinmw}.
\ee

\br{remark2}
As in the classical case one can also construct the universal cover of the Salvetti complex in this context. 
Since $M_W\to N_W$ is a regular cover we have the following exact sequence of groups
\[1\to \pi_1(M_W)\to \pi_1(N_W)\to W\to 1. \]
Choosing a suitable section of the map $\pi_1(N_W) \to W$ one can define a partial order on the set $\pi_1(N_w)\times \s^f_X$ such that the geometric realization is the universal cover (see \cite[Section 4.1]{parisarxiv} for details).
\er 

\br{remark3}
There is one more way to generalize the Vinberg complement. Given a reflection arrangement $\A_W$ in a smooth manifold $X$ consider the space
\[ B_W := (X\times X)\setminus \bigcup_{r\in R}(X_r\times X_r).\]
One might call $B_W$ the \textit{micro-bundle complement} because of its resemblance with Milnor's tangent micro-bundle construction.
Consider the dihedral group action on $S^1$.
In this case the micro-bundle complement $B_W$ is the torus $S^1\times S^1$ minus $4m$ points.
It is not hard to see that $B_W$ has the homotopy type of wedge of $4m+1$ circles which is not the homotopy type of associated $M_W$.
Hence it is an interesting problem to figure out a relationship between the homotopy types of $B_W$ and $M_W$. 
\er 

\printbibliography

\end{document}